\documentclass[11pt,fleqn]{scrartcl}
\usepackage[T1]{fontenc}
\usepackage[utf8]{inputenc}
\usepackage[english]{babel}
\usepackage{amsmath,amsfonts, amsthm,xcolor,amssymb}
\usepackage{paralist}
\usepackage{authblk}

\numberwithin{equation}{section}
\usepackage{mathtools, mathabx}
\usepackage[colorlinks=true,urlcolor=blue, citecolor=blue,linkcolor=blue,linktocpage,pdfpagelabels, bookmarksnumbered,bookmarksopen]{hyperref}
\usepackage{graphicx}
\usepackage{comment}
\usepackage[hyperpageref]{backref}

\usepackage[normalem]{ulem}

\newtheorem{thm}{Theorem}[section]
\newtheorem{lem}[thm]{Lemma}

\newtheorem{prop}[thm]{Proposition}

\theoremstyle{definition}
\newtheorem{rem}[thm]{Remark}
\theoremstyle{remark}

\newcommand{\ds}{\displaystyle}
\newcommand{\norm}[1]{\left\Vert#1\right\Vert}

\newcommand{\abs}[1]{\left\vert#1\right\vert}
\newcommand{\set}[1]{\left\{#1\right\}}
\newcommand{\R}{\mathbb{R}}

\newcommand{\de}{\partial}
\newcommand{\eps}{\varepsilon}

\def\XXint#1#2#3{{\setbox0=\hbox{$#1{#2#3}{\int}$}
    \vcenter{\hbox{$#2#3$}}\kern-.5\wd0}}

\DeclareMathOperator{\dist}{dist}

\newcommand\restr[2]{{
  \left.\kern-\nulldelimiterspace 
  #1 
  \vphantom{ |} 
  \right|_{#2} 
  }}
{\left\{\begin{array}{@{}l@{}}}{\end{array}\right.}
\makeatletter 
\def\@makefnmark{} 
\makeatother 

\title{Asymptotic behavior of the first Robin eigenvalue of nonlinear operators}

\author{Rosa Barbato$^*$, Francesco Della Pietra$^*$, Alba Lia Masiello$^*$ \thanks{
\emph{\textrm{Dipartimento di Matematica e Applicazioni ``R. Caccioppoli'', Universit\`a degli studi di Napoli Federico II, Via Cintia, Complesso Universitario Monte S. Angelo, 80126 Napoli, Italy.\\
 E-mail: rosa.barbato2@unina.it, f.dellapietra@unina.it, albalia.masiello@unina.it}}}}
\date{}

\begin{document}
\maketitle
\begin{abstract}
 \textbf{Abstract.} Let $\Omega$ be a bounded Lipschitz domain of $\R^N$, $N\geq 2$. In this paper, we study the asymptotic behavior  of the first Robin eigenvalue of the $p$-Laplace operator as $\beta$ goes to $0$ and as $\beta$ goes to $+\infty$, deriving sharp asymptotic expansions of the eigenvalue; the expansion in the Dirichlet limit $\beta\to+\infty$ is obtained under the additional assumption that   $\de\Omega$  is of class $C^{1,1}$.

 \noindent \textbf{MSC 2020:} 35J25 - 35P15 - 47J10 - 47J30. \\[.2cm]
\textbf{Key words and phrases:}  Nonlinear eigenvalue problems; Robin boundary conditions; Upper and lower bounds.

\end{abstract}

\begin{center}
\begin{minipage}{11cm}
\small
\tableofcontents
\end{minipage}
\end{center}

\section{Introduction}

Let $\Omega$ be a bounded, 
open, connected set of $\R^{N}$, $N\ge2$, with Lipschitz boundary. Let $1<p<+\infty$ and let $\beta$ be a real parameter. The first eigenvalue of the $p$-Laplace operator with Robin boundary conditions is 
\begin{equation}
\label{variationalaut}
    \lambda_p(\beta,\Omega)=\min_{w\in W^{1,p}(\Omega)\setminus\{0\}}\dfrac{\displaystyle\int_{\Omega}|\nabla w|^p\;dx+\beta\int_{\de\Omega}|w|^p\:d\mathcal{H}^{N-1}}{\displaystyle\int_{\Omega} |w|^p\;dx},
\end{equation}

where the minimum of \eqref{variationalaut} is achieved by any multiple of the positive function $u_{\beta}\in W^{1,p}(\Omega)$ that satisfies

\begin{equation}
    \label{eigenvalue_problem}
    \begin{cases}
        -\Delta_p u_{\beta}=\lambda_{p}(\beta,\Omega)u_{\beta}^{p-1} & \textrm{in }\Omega\\[1ex]
        \abs{\nabla u_{\beta}}^{p-2}\dfrac{\de u_{\beta}}{\de \nu}+\beta u_{\beta}^{p-1}=0 &\textrm{on }\de\Omega,\\[1ex]
        \norm{u_\beta}_{L^p(\Omega)}=1
    \end{cases}
\end{equation}
where $\nu$ is the outer normal to $\de\Omega$.

We aim to study the asymptotic behavior of the first eigenvalue of the $p$-Laplace operator in the limiting cases as $\beta$ goes to $0$ and $+\infty$.
{It is well known that, as $\beta\to 0$,}  $\lambda_{p}(\beta,\Omega)$ converges to $0$, which corresponds to the first trivial Neumann eigenvalue; {when} $\beta\to+\infty$, $\lambda_{p}(\beta,\Omega)$ converges to $\lambda_p^D(\Omega)$, the first Dirichlet eigenvalue of $-\Delta_{p}$, that is
\begin{equation}\label{vardiri}
  \lambda_p^D(\Omega)=\min_{\varphi\in W_0^{1,p}(\Omega)\setminus\{0\}}\dfrac{\displaystyle\int_{\Omega}\abs{\nabla \varphi}^p\;dx}{\displaystyle\int_{\Omega} \abs{\varphi}^p\;dx}, 
\end{equation}
and any minimizer of \eqref{vardiri} is a multiple of the positive solution to
\begin{equation*}
    \begin{cases}
        -\Delta_p u_{\infty}=\lambda_{p}^D(\Omega)u_{\infty}^{p-1} & \textrm{in }\Omega\\[1ex]
        u_\infty=0 &\textrm{on }\de\Omega,\\[1ex]
        \norm{u_\infty}_{L^p(\Omega)}=1.
    \end{cases}
\end{equation*}

The interest in this kind of problems has grown over the last decades. Many contributions have been given in the linear case $p=2$. When $\beta\to +\infty$, in \cite{fili15b} Filinovskiy  
 investigates the rate of convergence of $\lambda_2(\beta,\Omega)$ to $\lambda_2^D(\Omega)$, proving that 
\begin{equation}
    \label{fili}
     \lambda_2(\beta,\Omega)-\lambda_2^D(\Omega) = -\frac{1}{\beta} \dfrac{\ds\int_{\de\Omega}\abs{\nabla u_\infty}^2\, d\mathcal{H}^{N-1}}{\ds\int_\Omega u_\infty^2\, dx} + o\left(\beta^{-1}\right)
\end{equation}
whenever $\Omega$ is a set of class $C^3$. Under the same regularity assumption, in \cite{fili17} the analysis was extended to higher eigenvalues $\lambda_{2,k}(\beta,\Omega)$, at least when the corresponding Dirichlet eigenvalue $\lambda_{2,k}^D(\Omega)$ is simple. The approach considered in \cite{fili17} makes this assumption essential, since the proof of the asymptotic expansion relies on the differentiability of the Dirichlet eigenvalue and, consequently, of the Robin one for sufficiently large values of $\beta$.

A different approach is used in \cite{BBBT}, where the authors  quantify the eigenvalue gap $\lambda_{2,k}(\beta,\Omega)-\lambda_{2,k}^D(\Omega)$ using a functional analytic approach: this {makes it possible} to treat the simple and multiple eigenvalue case at the same time, but it requires $C^\infty$ regularity of the set $\Omega.$

Subsequently, the strong regularity assumption on $\Omega$ was removed in \cite{ognibene}, where singular perturbation techniques are used to derive the first-order expansion as in \eqref{fili} for bounded Lipschitz domains. By interpreting the Robin eigenvalue problem as a singular perturbation of the corresponding Dirichlet problem for large $\beta$, the author is able to treat both simple and multiple eigenvalues and to extend the result to Lipschitz domains.

In all the aforementioned cases, the linearity of the Laplacian and the fact that the ambient space of the eigenfunctions is a Hilbert space are crucial in determining the asymptotic {expansion} \eqref{fili}. The case $p\neq2$ is more delicate because of the nonlinear structure of the $p$-Laplacian and the possible degeneracy of the operator.

To the best of our knowledge, few results for the $p$-Laplacian are known. For example, when $\beta$ goes to $-\infty$, it was proved in \cite{KP17} (see \cite{papo} for the case $p=2$) that, for a set with $C^{1,1}$ boundary, it holds that
\[ \lambda_{p}(\beta,\Omega) = -(p-1)\abs{\beta}^{\frac{p}{p-1}}- (N-1)H_{max}(\Omega)\abs{\beta}+ o({\abs{\beta}}),
\]
{where $H_{max}(\Omega)$ denotes the maximum of the mean curvature of $\de\Omega$.

Another contribution in the nonlinear setting can be found in \cite{OB}, where the asymptotic behavior of a class of nonlinear variational problems with Robin-type boundary conditions is studied. In place of the eigenvalues, the authors consider the energy 

\[E_\beta=\inf\left\{\frac{1}{p}\int_{\Omega}\abs{\nabla u}^p\, dx+ \frac{\beta}{q}\int_{\de\Omega} |u|^q\, d\mathcal{H}^{N-1}- \int_\Omega fu\, dx \, : \, u\in W^{1,p}(\Omega)\right\}\]
for $p,q\in (1, \infty) $,  on a bounded, Lipschitz domain.
Using variational methods, the authors derive a first-order expansion of $E_\beta$ both as $\beta\to0^+$ and as $\beta\to+\infty$.
They show that, for $\beta\to +\infty$,  the energy
converges to the Dirichlet one with a power-type rate depending only on $q$. An analogous result holds for the Neumann limit when $\beta\to 0$.

Some partial results on the expansion of   
\begin{equation} \label{pgap}
     \lambda_p(\beta,\Omega)-\lambda_p^D(\Omega)
\end{equation}
 can be given exploiting the results proved in \cite{BDP_pota}. The authors prove both upper and lower bounds for $\lambda_p(\beta,\Omega)$, in terms of the Dirichlet spectral datum, the Dirichlet $p$-torsional rigidity of the domain, and the explicit boundary contribution through $\beta$ and the perimeter $P(\Omega)$. The first result is an upper bound for $\lambda_p(\beta,\Omega)$ in terms of the first Dirichlet eigenvalue, that holds whenever
$\Omega$ is open, bounded, with $C^{1,\gamma}$ boundary:

\[
\lambda_p(\beta,\Omega)
\le
\left(
\frac{1}{\lambda_p^D(\Omega)^{\frac{1}{p-1}}}
+
\frac{\overline K}{\big(\beta P(\Omega)\big)^{\frac{1}{p-1}}}
\right)^{-(p-1)}\hspace{-10mm},
\]
where  $\overline K$ is a constant depending only on $p$ and $\Omega$. { This inequality shows that the gap \eqref{pgap} cannot vanish faster than $\beta^{-\frac{1}{p-1}}$.} Further bounds were proved in \cite{BDP_pota}, { such as} the one in terms of the Dirichlet $p$-torsional rigidity  
\[
\lambda_p(\beta,\Omega)
\le
\left[
\frac{T_p(\Omega)}{|\Omega|}
+
\left(\frac{|\Omega|}{\beta P(\Omega)}\right)^{\frac{1}{p-1}}
\right]^{-(p-1)}\hspace{-10mm},
\]
or the lower bound in terms of a Neumann-type constant $\nu_p$ (see Section \ref{limit_zero} for the precise definition),
 \[
\lambda_p(\beta,\Omega)
\ge
\left[
\frac{1}{\nu_p}
+
\left(\frac{|\Omega|}{\beta P(\Omega)}\right)^{\frac{1}{p-1}}
\right]^{-(p-1)}\hspace{-10mm}.
\]

{ In this paper we obtain the exact rate of convergence of $\lambda_{p}(\beta,\Omega)$, both as $\beta\to +\infty$ and $\beta\to 0$.}
Let us now state our first main result. 
\begin{thm}
\label{main_theorem}
    Let $\Omega\subseteq\R^N$ be an open, bounded, connected set with $C^{1,1}$ boundary. Let $\lambda_p(\beta,\Omega)$ and $\lambda_p^D(\Omega)$ be the first { eigenvalues} of the $p$-Laplacian with Robin and Dirichlet boundary conditions, respectively. Let $u_\infty$ be the first positive eigenfunction of the $p$-Laplacian with Dirichlet boundary conditions, normalized such that $\norm{u_\infty}_{L^p(\Omega)}=1$. { Then}
    \begin{equation}
    \label{robin:dirichlet:bound}
        \lambda_p(\beta,\Omega) =\lambda_p^D(\Omega) -\frac{p-1}{\beta^\frac{1}{p-1}}\int  _{\de\Omega} \abs{\nabla u_\infty}^p\, d\mathcal{H}^{N-1} + o\left(\beta^{-\frac{1}{p-1}}\right),
    \end{equation}
    as $\beta \to +\infty$.
\end{thm}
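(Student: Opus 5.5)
We prove matching upper and lower bounds for $\beta^{\frac{1}{p-1}}\bigl(\lambda_p^D(\Omega)-\lambda_p(\beta,\Omega)\bigr)$. Throughout, $\de\Omega\in C^{1,1}$ is used in two ways. First, $u_\infty\in C^{1,\alpha}(\overline\Omega)$ with $\abs{\nabla u_\infty}>0$ on $\de\Omega$ by the Hopf lemma for the $p$-Laplacian. Second, the distance function $d(x)=\dist(x,\de\Omega)$ is $C^{1,1}$ in a tubular neighbourhood $\{d<\delta_0\}$. The constant $(p-1)\beta^{-\frac{1}{p-1}}$ comes from a one-dimensional problem in the normal direction. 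Set $g=\abs{\nabla u_\infty}=\abs{\de_\nu u_\infty}$ on $\de\Omega$. Among profiles that decay linearly with slope $g$ inside and satisfy the Robin condition $\abs{w'}^{p-2}w'=\beta w^{p-1}$, the boundary value is $w=g\beta^{-\frac{1}{p-1}}$. The saving compared with the Dirichlet energy is then $(p-1)g^p\beta^{-\frac{1}{p-1}}$ per unit area.

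\textbf{Upper bound.} We use the test function $w_\beta=u_\infty+\beta^{-\frac{1}{p-1}}\,\eta$, where $\eta$ extends $g$ from $\de\Omega$ into $\Omega$ (for instance $\eta(x)=\abs{\nabla u_\infty(x)}$, which is continuous up to the boundary). Write $t=\beta^{-\frac{1}{p-1}}$. We expand $\int_\Omega\abs{\nabla u_\infty+t\nabla\eta}^p$ to first order in $t$. The first-order term is $pt\int_\Omega\abs{\nabla u_\infty}^{p-2}\nabla u_\infty\cdot\nabla\eta$. Integrating by parts with the equation $-\Delta_p u_\infty=\lambda^D u_\infty^{p-1}$ turns it into
\[
pt\lambda^D\int_\Omega u_\infty^{p-1}\eta-pt\int_{\de\Omega}g^{p}.
\]
Here we used $\abs{\nabla u_\infty}^{p-2}\de_\nu u_\infty=-g^{p-1}$ and $\eta=g$ on the boundary. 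Next, the boundary term of the Rayleigh quotient is $\beta\int_{\de\Omega}t^p g^p=t\int_{\de\Omega}g^p$. The denominator is $1+pt\int_\Omega u_\infty^{p-1}\eta+o(t)$. Combining these, the $\lambda^D$ terms cancel and the quotient equals $\lambda^D-(p-1)t\int_{\de\Omega}g^p+o(t)$. The remainders are justified because $\eta$ is bounded, and because $\eta\in W^{1,p}$ (or a mollified version of it) gives a remainder $o(t)$ by dominated convergence for both $p\ge2$ and $p<2$. If $\abs{\nabla u_\infty}$ is not in $W^{1,p}$, we replace $\eta$ by a smooth extension of a uniform approximation of $g$ and pass to the limit at the end.

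\textbf{Lower bound.} This is the main obstacle, since there is no Hilbert-space linearization. We plan to use the eigenfunction $u_\infty$ as a sub/supersolution-type weight in a Picone identity. Picone's inequality for the $p$-Laplacian, applied with $u_\beta$ and $u_\infty+\varepsilon$ (or with $u_\infty$ and $u_\beta$ in the reverse roles), gives
\[
\int_\Omega\abs{\nabla u_\beta}^{p-2}\nabla u_\beta\cdot\nabla\Bigl(\frac{u_\infty^p}{u_\beta^{p-1}}\Bigr)\le\int_\Omega\abs{\nabla u_\infty}^p.
\]
Testing the Robin equation with $u_\infty^p/u_\beta^{p-1}$ produces no boundary term, because $u_\infty=0$ on $\de\Omega$. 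This only yields $\lambda_\beta\le\lambda^D$. To capture the correction, we instead test the Dirichlet equation with $u_\beta^p/(u_\infty+s)^{p-1}$ for suitable $s$, or work directly with the identity obtained by testing the Dirichlet equation against $u_\beta$ and the Robin equation against $u_\infty$. For $p=2$ the latter gives exactly
\[
(\lambda^D-\lambda_\beta)\int_\Omega u_\infty u_\beta=\int_{\de\Omega}u_\beta\,\de_\nu u_\infty\,\cdot(-1)=\int_{\de\Omega}g\,u_\beta.
\]
For general $p$ we use the analogous cross-testing:
\[
\int_\Omega\abs{\nabla u_\infty}^{p-2}\nabla u_\infty\cdot\nabla u_\beta=\lambda^D\int_\Omega u_\infty^{p-1}u_\beta-\int_{\de\Omega}g^{p-1}u_\beta .
\]
We combine this with the convexity inequality
\[
\abs{\nabla u_\beta}^p\ge\abs{\nabla u_\infty}^p+p\abs{\nabla u_\infty}^{p-2}\nabla u_\infty\cdot\nabla(u_\beta-u_\infty).
\]
Plugging into the Rayleigh quotient of $u_\beta$ gives a lower bound of the form
\[
\lambda_\beta\ge\lambda^D-p\int_{\de\Omega}g^{p-1}u_\beta+\beta\int_{\de\Omega}u_\beta^p+(\text{terms involving }\int_\Omega u_\infty^{p-1}u_\beta-1).
\]
We minimize pointwise in $u_\beta$ on the boundary: $\beta s^p-pg^{p-1}s\ge-(p-1)g^p\beta^{-\frac{1}{p-1}}$, which is exactly the desired constant. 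What remains is to show that the interior terms are $o(t)$. This needs the convergence $u_\beta\to u_\infty$ in $W^{1,p}$, and a quantitative estimate such as $\int_\Omega u_\beta^p-p\int_\Omega u_\infty^{p-1}u_\beta+(p-1)\int_\Omega u_\infty^p\ge0$ with the correct sign. We expect this bookkeeping (controlling the defect $1-\int_\Omega u_\infty^{p-1}u_\beta$ against $\lambda^D$ and showing it is $o(t)$, using the a priori bound $\norm{u_\beta}_{L^p(\de\Omega)}^p=O(\beta^{-\frac{p}{p-1}})$ from the upper bound) to be the delicate step.
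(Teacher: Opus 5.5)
Your upper bound is the same as the paper's (test function $u_\infty+\beta^{-\frac{1}{p-1}}\eta$ with $\eta=\abs{\nabla u_\infty}$ on $\de\Omega$, integration by parts against the Dirichlet equation, and $\beta(\beta^{-\frac{1}{p-1}})^p=\beta^{-\frac{1}{p-1}}$). Your fallback with smooth $\eta_k$ uniformly close to $\abs{\nabla u_\infty}$ on $\de\Omega$ also works.

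\textbf{The lower bound has a genuine gap, exactly at the step you flag as delicate.} Your convexity-around-$u_\infty$ plus cross-testing argument yields precisely
\[
\lambda_p(\beta,\Omega)\ge\lambda_p^D(\Omega)-p\,\lambda_p^D(\Omega)\,\delta_\beta-\frac{p-1}{\beta^{\frac{1}{p-1}}}\int_{\de\Omega}\abs{\nabla u_\infty}^p\,d\mathcal H^{N-1},
\qquad
\delta_\beta:=1-\int_\Omega u_\infty^{p-1}u_\beta\,dx .
\]
H\"older gives $\delta_\beta\ge0$, so this term has the harmful sign, and you need $\delta_\beta=o(\beta^{-\frac{1}{p-1}})$. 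The inequality you propose, $\int_\Omega u_\beta^p-p\int_\Omega u_\infty^{p-1}u_\beta+(p-1)\int_\Omega u_\infty^p\ge0$, is just $p\,\delta_\beta\ge0$, which is the wrong direction.

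By the normalizations,
\[
\delta_\beta=\frac1p\int_\Omega\Bigl(u_\beta^p-u_\infty^p-p\,u_\infty^{p-1}(u_\beta-u_\infty)\Bigr)dx .
\]
This is controlled by $\int_\Omega\bigl(u_\infty^{p-2}\abs{u_\beta-u_\infty}^2+\abs{u_\beta-u_\infty}^p\bigr)dx$ for $p\ge2$, and by $\int_\Omega\abs{u_\beta-u_\infty}^p\,dx$ for $p<2$ (Lemma~\ref{lem:algebraic_taylor}). So what you actually need is a \emph{quantitative} rate, e.g.\ $\norm{u_\beta-u_\infty}_{L^p(\Omega)}=O(\beta^{-\frac{1}{p-1}})$. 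Only qualitative convergence $u_\beta\to u_\infty$ is available, and for $p\neq2$ there is no spectral-gap or Hilbert structure to convert eigenvalue closeness into such a rate.

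Moreover, $\norm{u_\beta}^p_{L^p(\de\Omega)}=O(\beta^{-\frac{p}{p-1}})$ does not follow from the upper bound. That bound only gives $\beta\int_{\de\Omega}u_\beta^p\le\lambda_p(\beta,\Omega)$, i.e.\ $O(\beta^{-1})$. The sharper estimate needs a barrier argument as in Proposition~\ref{boundary:prop}, and even then it controls the trace, not the interior defect $\delta_\beta$.

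\textbf{The missing idea.} Do not linearize around $u_\infty$, which solves the Dirichlet problem with the ``wrong'' source $\lambda_p^D(\Omega)u_\infty^{p-1}$. Linearize instead around the Dirichlet solution with the \emph{actual} source.

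The paper sets $f_\beta=u_\beta^{p-1}$, so that $\norm{f_\beta}_{L^{p'}(\Omega)}=1$ with $p'=p/(p-1)$. It observes that $U_\beta=\lambda_p(\beta,\Omega)^{-\frac{1}{p-1}}u_\beta$ minimizes the Robin energy $\frac1p\int_\Omega\abs{\nabla v}^p+\frac\beta p\int_{\de\Omega}\abs{v}^p-\int_\Omega f_\beta v$, with minimum value $-\frac1{p'}\lambda_p(\beta,\Omega)^{-\frac{1}{p-1}}$. It then applies convexity around the solution $V_\beta\in W^{1,p}_0(\Omega)$ of $-\Delta_pV_\beta=f_\beta$.

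Since $V_\beta$ solves the equation with source $f_\beta$ exactly, the interior terms cancel identically. The Dirichlet energy is bounded below with no error term, $\min_{W^{1,p}_0}\{\frac1p\int\abs{\nabla\varphi}^p-\int f_\beta\varphi\}\ge-\frac1{p'}\lambda_p^D(\Omega)^{-\frac{1}{p-1}}$, by H\"older and the definition of $\lambda_p^D$. Your pointwise boundary minimization then gives
\[
\lambda_p(\beta,\Omega)^{-\frac{1}{p-1}}\le\lambda_p^D(\Omega)^{-\frac{1}{p-1}}+\beta^{-\frac{1}{p-1}}\int_{\de\Omega}\abs{\nabla V_\beta}^p\,d\mathcal H^{N-1}.
\]
Now all the $\beta$-dependence sits in the boundary flux of $V_\beta$, and only qualitative convergence is needed. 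A uniform $L^\infty$ bound on $u_\beta$ (Moser iteration) gives uniform $C^{1,\alpha}(\overline\Omega)$ bounds on $V_\beta$. Hence $V_\beta\to\lambda_p^D(\Omega)^{-\frac{1}{p-1}}u_\infty$ in $C^1(\overline\Omega)$, and a Taylor expansion of $t\mapsto t^{-(p-1)}$ concludes.
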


The proof of Theorem \ref{main_theorem} is based on a variational approach and is divided into two  estimates. More precisely, we first establish a sharp upper bound for the gap \eqref{pgap}
by constructing a suitable perturbation of the Dirichlet eigenfunction $u_\infty$, obtained by adding to it a boundary correction term of order $\beta^{-\frac{1}{p-1}}$. This yields the asymptotic upper estimate stated in Proposition \ref{upper_bound}.
The lower bound, proved in Proposition \ref{lower_bound}, is  obtained by choosing a suitable source term dependent on the Robin eigenfunction, comparing the Robin energy with the Dirichlet energy of an auxiliary problem. Using standard global regularity theory for the Dirichlet problem, we identify the exact asymptotic behavior of the boundary terms and derive the lower estimate. 

To conclude the paper, we study the behavior of $\lambda_{p}(\beta,\Omega)$ as $\beta\to 0$. In \cite{BDP_pota}, { the first-order Taylor expansion}

\[\lambda_{p}(\beta,\Omega)=\beta \frac{P(\Omega)}{\abs{\Omega}}+ o(\beta), \quad \text{ as } \beta\to 0,
\]
{ was established;} in Theorem \ref{asymptotic_zero}, we derive the next-order term.
\begin{thm}
\label{asymptotic_zero}
Let $\Omega\subseteq\R^N$ be an open, bounded, connected set with Lipschitz boundary. { Then}
\begin{equation}\label{neumannlim}
\lambda_p(\beta, \Omega) = \beta \frac{P(\Omega)}{|\Omega|} -(p-1) \,{\abs{\beta}^{\frac{p}{p-1}}}\frac{P(\Omega)}{\abs{\Omega}^2}\int_\Omega v\,dx+o\big({\abs{\beta}^{\frac{p}{p-1}}}\big), \qquad \text{ as } \beta \to 0,
\end{equation}
where $v$ is { the unique solution} to
\begin{equation}
    \label{equazionelimite}
 \begin{cases}
    -\Delta_p v=\frac{P(\Omega)}{|\Omega|}& \text{ in } \Omega\\[1ex]
     |\nabla v|^{p-2}\dfrac{\de v}{\de \nu}=-1 & \text{ on } \de\Omega \\[1ex]
     \int_{\de\Omega}v\, d\mathcal{H}^{N-1}=0.
\end{cases}
\end{equation}
\end{thm}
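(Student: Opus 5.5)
The plan is to prove the two inequalities in \eqref{neumannlim} separately. Both rest on one expansion of the Rayleigh quotient around the constant function $1$, perturbed in the direction of $v$. Throughout, $c:=P(\Omega)/|\Omega|$ and $t^{[q]}:=|t|^{q-1}t$. Taking $v$ itself as test function in the weak formulation of \eqref{equazionelimite},
\[
\int_\Omega |\nabla v|^{p-2}\nabla v\cdot\nabla\varphi\,dx = c\int_\Omega\varphi\,dx-\int_{\de\Omega}\varphi\,d\mathcal H^{N-1},
\]
and using $\int_{\de\Omega}v=0$ gives the key identity $\int_\Omega|\nabla v|^p\,dx=c\int_\Omega v\,dx$. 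Existence and uniqueness of $v$ follow by minimizing $\frac1p\int|\nabla w|^p-c\int w+\int_{\de\Omega}w$ on $\{w\in W^{1,p}(\Omega):\int_{\de\Omega}w=0\}$. Coercivity there comes from a Poincaré inequality with zero boundary mean, and the compatibility condition $c|\Omega|=P(\Omega)$ holds.

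\emph{Upper bound.} I test \eqref{variationalaut} with $w=1+tv$, where $t=\beta^{[1/(p-1)]}$, so that $|t|^p=|\beta|^{p/(p-1)}=\beta t$.
\begin{itemize}
\item The numerator is $|t|^p\int|\nabla v|^p+\beta\int_{\de\Omega}|1+tv|^p$. Expanding $|1+tv|^p=1+ptv+R$, where $|R|\le C(t^2v^2+|tv|^p)$ (only the term $|tv|^p$ is needed when $p<2$), and using $\int_{\de\Omega}v=0$, the boundary term equals $\beta P(\Omega)+o(|\beta|^{p/(p-1)})$. Here one uses $v\in L^p(\de\Omega)$, and for $p>2$ also $v\in L^2(\de\Omega)$, which holds since $\de\Omega$ is bounded.
\item The denominator is $|\Omega|+pt\int_\Omega v+o(t)$.
\end{itemize}
Dividing, with $A:=c\int v/|\Omega|$, the quotient is
\[
\beta c+|t|^pA-p\beta tA+o(|\beta|^{p/(p-1)}) = \beta c-(p-1)|\beta|^{p/(p-1)}A+o(|\beta|^{p/(p-1)}),
\]
which is the claimed expansion. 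This choice of $t$ is exactly the minimizer of $|t|^pA-p\beta tA$, and that is what signals the sharpness of the bound.

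\emph{Lower bound.} I decompose the eigenfunction as $u_\beta=a_\beta\,(1+s z_\beta)$, with $s=\beta^{[1/(p-1)]}$ and $a_\beta=\frac1{P(\Omega)}\int_{\de\Omega}u_\beta$, so that $\int_{\de\Omega}z_\beta=0$. The steps are as follows.
\begin{enumerate}
\item By the known first-order result $\lambda_p(\beta,\Omega)=\beta c+o(\beta)$, the functions $u_\beta$ converge to the constant $|\Omega|^{-1/p}$ in $W^{1,p}$, so $a_\beta\to|\Omega|^{-1/p}$.
\item The function $z_\beta$ solves $-\Delta_p z_\beta=\frac{\lambda}{\beta}(1+sz_\beta)^{p-1}$ in $\Omega$, with $|\nabla z_\beta|^{p-2}\de_\nu z_\beta=-(1+sz_\beta)^{p-1}$ on $\de\Omega$.
\item Testing this equation with $z_\beta$ and using the zero-boundary-mean Poincaré inequality together with the trace inequality should give a uniform bound on $\|z_\beta\|_{W^{1,p}}$. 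The right-hand sides are $O(1)+O(|s|)\,\|z_\beta\|^{p-1}$-type terms, which can be absorbed.
\item By compactness, $z_\beta\to z$ weakly in $W^{1,p}$ and strongly in $L^p(\Omega)$ and $L^p(\de\Omega)$. The limit $z$ solves \eqref{equazionelimite}, so $z=v$ by uniqueness, and the whole family converges.
\item Strong convergence $\nabla z_\beta\to\nabla v$ in $L^p$ comes from the standard $(S_+)$ argument: test the difference of the equations with $z_\beta-v$.
\end{enumerate}
Finally I plug $u_\beta$ into the identity $\lambda\int u_\beta^p=\int|\nabla u_\beta|^p+\beta\int_{\de\Omega}u_\beta^p$, in which $a_\beta^p$ cancels. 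This reproduces the same expansion as in the upper bound with $z_\beta$ in place of $v$, and gives $\lambda=\beta c+|s|^pA_\beta-p\beta sA_\beta+o(|\beta|^{p/(p-1)})$ with $A_\beta\to A$, that is, equality to the required order.

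\emph{Main obstacle.} I expect the hardest step to be the uniform $W^{1,p}$ bound on $z_\beta$, in particular controlling the nonlinear right-hand sides $(1+sz_\beta)^{p-1}$ without an a priori $L^\infty$ bound. If the direct absorption fails, I would instead use a Picone or convexity inequality comparing the Robin functional with $1+sv$. Another option is to bound $\int|\nabla(u_\beta/a_\beta)|^p$ by $O(|\beta|^{p/(p-1)})$ directly from the upper bound just proved together with the Rayleigh identity; this gives $\|\nabla z_\beta\|_p=O(1)$ immediately, and the zero boundary mean then controls the full norm.
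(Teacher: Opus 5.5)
Your proposal is correct. Your upper bound is the paper's: $1+\beta^{[1/(p-1)]}v$ is, up to the factor $|\Omega|^{-1/p}$, the paper's test function $\Phi_\beta$, and the expansion is the same.

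The lower bound follows a genuinely different route. The paper also writes $u_\beta=m_\beta+|\beta|^{\gamma}w_\beta$ with $\int_{\de\Omega}w_\beta\,d\mathcal H^{N-1}=0$. It bounds $w_\beta$ in $W^{1,p}(\Omega)$ by the same testing argument as your step 3. No $L^\infty$ control is needed there: $(1+sz_\beta)^{p-1}=(u_\beta/a_\beta)^{p-1}$ is bounded in $L^{p'}(\Omega)$ and $L^{p'}(\de\Omega)$, so H\"older and the zero-boundary-mean Poincar\'e inequality close the estimate. Your ``main obstacle'' is therefore not an obstacle.

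After that, however, the paper never identifies the limit of $w_\beta$. It uses the tangent-plane inequality for $\xi\mapsto|\xi|^p$ at $\nabla V$, namely $\int_\Omega|\nabla w_\beta|^p\,dx\ge pL(w_\beta)-(p-1)L(V)$, where $L$ is the right-hand side of the weak form of the limit problem. The term $pL(w_\beta)$ cancels the cross term $-p\,\mathrm{sign}(\beta)\frac{P(\Omega)}{|\Omega|}m_\beta^{p-1}\int_\Omega w_\beta\,dx$ up to $o(1)$. This needs only boundedness of $w_\beta$: no compactness, no identification of the limit, no monotonicity argument.

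Your route instead proves $z_\beta\to v$ strongly in $W^{1,p}(\Omega)$ and inserts this into the homogeneous Rayleigh identity. There $a_\beta^p$ cancels, so you also bypass the paper's expansion of $m_\beta^p$ from the normalization. It costs the $(S_+)$ step, but it yields both inequalities at once, making the upper bound redundant. It also gives the profile $u_\beta=a_\beta\big(1+\beta^{[1/(p-1)]}v\big)+o(|\beta|^{1/(p-1)})$ in $W^{1,p}(\Omega)$.

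A few details to fix.

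\begin{enumerate}
\item Step 4 as written is not justified, because weak convergence does not let you pass to the limit in $|\nabla z_\beta|^{p-2}\nabla z_\beta$. Step 5 makes it superfluous. The weak formulations for $z_\beta$ and for $v$ both hold for all $\varphi\in W^{1,p}(\Omega)$ (for $v$ thanks to the compatibility condition). Testing their difference with $z_\beta-v$ leaves on the right-hand side $\int_\Omega\big(\frac{\lambda_p(\beta,\Omega)}{\beta}(u_\beta/a_\beta)^{p-1}-c\big)(z_\beta-v)\,dx-\int_{\de\Omega}\big((u_\beta/a_\beta)^{p-1}-1\big)(z_\beta-v)\,d\mathcal H^{N-1}$. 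This tends to $0$, so strong convergence to $v$ follows directly, without any prior identification of the limit.
\item In the last step, the two coefficients $\frac{1}{|\Omega|}\int_\Omega|\nabla z_\beta|^p\,dx$ and $\frac{c}{|\Omega|}\int_\Omega z_\beta\,dx$ are different; they coincide only at $v$. Both tend to $A$ only because of the strong gradient convergence. Writing a single $A_\beta$ hides exactly the point where step 5 is needed.
\item For $\beta\to0^-$, take $\lambda_p(\beta,\Omega)/\beta\to c$ from the derivative formula of Proposition~\ref{prop:properties} at $\beta=0$, rather than from the first-order result you quote.
\end{enumerate}
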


Problem \eqref{equazionelimite} was already identified in \cite{BDP_pota} as the { limit, as $\beta\to0^{+}$, of the Poisson problem with Robin boundary conditions}. More precisely, if $v_\beta$ denotes the solution to
\[
 \begin{cases}
-\Delta_p v_\beta=\dfrac{P(\Omega)}{|\Omega|} & \text{in } \Omega,\\[1ex]
|\nabla v_\beta|^{p-2}\dfrac{\de v_\beta}{\de \nu}+\beta v_\beta^{p-1}=0 & \text{on } \de\Omega,
\end{cases}
\]
then $v_\beta$, normalized by subtracting its mean value on  $\de\Omega$, converges to the solution to \eqref{equazionelimite}.

The structure of the paper is the following. In Section \ref{preliminary} we first recall some preliminary properties of the first Robin eigenvalue (giving the full proofs in Appendix \ref{appendix:properties}), followed by Section \ref{limit_infinity} and Section \ref{limit_zero}, in which we give a detailed study of its asymptotic behavior as $\beta$ goes to $+\infty$ and to $0$ (the latter for both signs of $\beta$), respectively. We point out that the required boundary regularity depends on the regime: the preliminary properties of Section \ref{preliminary} and the Neumann limit $\beta\to0$ (Theorem \ref{asymptotic_zero}) only require   $\de\Omega$  to be Lipschitz, whereas the expansion in the Dirichlet limit $\beta\to+\infty$ (Theorem \ref{main_theorem}) is established for $C^{1,1}$ domains.

\section{Some properties of the first Robin eigenvalue}
\label{preliminary}
{
We start by recalling some well-known properties of the first Robin eigenvalue $\lambda_p(\beta, \Omega)$. For the sake of completeness, we give the full proofs in Appendix \ref{appendix:properties}.

The variational quantity \eqref{variationalaut} is well defined for every $\beta\in\R$. Since the Neumann limit $\beta\to0$ is studied from both sides (see Section \ref{limit_zero}), the results of this section are stated for an arbitrary sign of the parameter; throughout this section $\Omega$ is a bounded, connected, open set with Lipschitz boundary.

We begin recalling a trace interpolation inequality, which is the key tool to handle the boundary term when $\beta<0$.


\begin{lem}\label{traceineq}
 Let $\Omega \subset \R^N$ an open, bounded domain with Lipschitz boundary. Then for every $\varepsilon> 0$ there exists $C(\varepsilon)>0$ for which
   \begin{equation}
 \label{trace:interp}
  \int_{\de \Omega} \abs{w}^p\;d\mathcal{H}^{N-1}\le \varepsilon\int_{\Omega}\abs{\nabla w}^p\;dx+C(\eps)\int_{\Omega}\abs{w}^p\;dx.
   \end{equation}
\end{lem}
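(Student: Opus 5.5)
We argue by contradiction and compactness, using the compact embedding $W^{1,p}(\Omega)\hookrightarrow L^p(\Omega)$ (Rellich–Kondrachov for bounded Lipschitz domains) together with the continuity of the trace operator $W^{1,p}(\Omega)\to L^p(\de\Omega)$. Suppose that for some $\eps>0$ no constant $C(\eps)$ works. Then for every $n\in\mathbb N$ there is $w_n\in W^{1,p}(\Omega)$ with
\[
\int_{\de\Omega}\abs{w_n}^p\,d\mathcal H^{N-1}>\eps\int_\Omega\abs{\nabla w_n}^p\,dx+n\int_\Omega\abs{w_n}^p\,dx .
\]
By homogeneity we normalize $\norm{w_n}_{L^p(\de\Omega)}=1$. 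The inequality then gives $\int_\Omega\abs{\nabla w_n}^p\,dx<1/\eps$ and $\int_\Omega\abs{w_n}^p\,dx<1/n$, so $(w_n)$ is bounded in $W^{1,p}(\Omega)$.

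Since $p>1$, the space is reflexive, so up to a subsequence $w_n\rightharpoonup w$ weakly in $W^{1,p}(\Omega)$. Compactness gives $w_n\to w$ strongly in $L^p(\Omega)$, and therefore $w=0$ because $\norm{w_n}_{L^p(\Omega)}\to0$. It remains to reach a contradiction with $\norm{w_n}_{L^p(\de\Omega)}=1$.

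This is the delicate step: the trace operator is only continuous, and weak convergence does not automatically give strong convergence of traces. We will use the fact that on bounded Lipschitz domains the trace map $W^{1,p}(\Omega)\to L^p(\de\Omega)$ is itself compact. A proof runs through the embedding $W^{1,p}(\Omega)\hookrightarrow W^{s,p}(\Omega)$ for $s\in(1/p,1)$, which is compact, followed by the continuity of the trace $W^{s,p}(\Omega)\to L^p(\de\Omega)$.

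Alternatively, and more elementarily, we can prove the inequality directly without any compactness argument. Take a smooth vector field $h$ on $\overline\Omega$ with $h\cdot\nu\ge\delta>0$ $\mathcal H^{N-1}$-a.e. on $\de\Omega$; such a field exists for Lipschitz domains by a partition of unity and the local graph structure. Applying the divergence theorem to $\abs{w}^p h$ (first for smooth $w$, then by density) gives
\[
\delta\int_{\de\Omega}\abs{w}^p\le\int_\Omega \abs{w}^p\operatorname{div}h+p\int_\Omega\abs{w}^{p-1}\abs{\nabla w}\abs{h}.
\]
Young's inequality $p\abs{w}^{p-1}\abs{\nabla w}\le \eta\abs{\nabla w}^p+C_\eta\abs{w}^p$, with $\eta$ chosen so that $\eta\norm{h}_\infty/\delta=\eps$, then yields \eqref{trace:interp}.

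In the write-up we would present this second argument, since it is self-contained and gives an explicit $C(\eps)$. The only genuine technical point is the construction of the transversal field $h$ on a Lipschitz boundary.
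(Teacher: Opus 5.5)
Your proposal is correct. Your first sketch is exactly the paper's proof: contradiction, normalization of the boundary norm, weak limit $w=0$, and a contradiction via compactness of the trace $W^{1,p}(\Omega)\to L^p(\de\Omega)$. The paper simply quotes that compactness, whereas you also indicate where it comes from.

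The argument you propose to write up is genuinely different. It is a direct identity from the divergence theorem applied to $\abs{w}^p h$, with $h$ transversal to $\de\Omega$, followed by Young's inequality. This avoids both the contradiction and any compactness, and it is quantitative. Tracking constants, one may take
\[
C(\eps)=\frac{\norm{\operatorname{div}h}_{\infty}}{\delta}+(p-1)\frac{\norm{h}_\infty}{\delta}\Big(\frac{\eps\,\delta}{\norm{h}_\infty}\Big)^{-\frac{1}{p-1}},
\]
so $C(\eps)=O(\eps^{-1/(p-1)})$ as $\eps\to0$; the compactness argument gives no such information.

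Moreover, using H\"older instead of Young in the same identity gives $\norm{w}^p_{L^p(\de\Omega)}\le C\norm{w}^{p-1}_{L^p(\Omega)}\norm{w}_{W^{1,p}(\Omega)}$. Compactness of the trace then follows from Rellich's theorem, so your route actually proves the ingredient the paper takes for granted.

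The price is the two standard facts you flag:
\begin{itemize}
\item the existence of $h\in C^\infty(\overline\Omega;\R^N)$ with $h\cdot\nu\ge\delta>0$ a.e. on a Lipschitz boundary. In local graph coordinates $\Omega=\{x_N<\phi(x')\}$, take $e_N$, for which $e_N\cdot\nu=(1+\abs{\nabla\phi}^2)^{-1/2}\ge(1+L^2)^{-1/2}$, and glue these fields with a partition of unity;
\item the density of $C^\infty(\overline\Omega)$ in $W^{1,p}(\Omega)$, together with continuity of the trace, to pass from smooth to general $w$.
\end{itemize}

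For the paper's needs only the existence of some $C(\eps)$ matters (e.g. with $\eps=1/(2\abs{\beta})$ in Proposition \ref{prop:properties}), so either route suffices. Your explicit constant would additionally yield $\lambda_p(\beta,\Omega)\ge -C\abs{\beta}^{p/(p-1)}$ as $\beta\to-\infty$, matching the order of the asymptotics recalled in the introduction.
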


\begin{prop}\label{prop:properties}
For every $\beta\in\R$ the infimum in \eqref{variationalaut} is finite and attained. The minimizer of \eqref{variationalaut} is unique up to a multiplicative constant. In particular, the first eigenfunction has constant sign; we denote by $u_\beta$ the positive one normalized by $\norm{u_\beta}_{L^p(\Omega)}=1$.

The map $\beta\mapsto\lambda_p(\beta,\Omega)$ is concave, non-decreasing and continuously differentiable on $\R$, with
\begin{equation}\label{derivata}
\frac{d}{d\beta}\lambda_p(\beta,\Omega)=\int_{\de\Omega}u_\beta^p\,d\mathcal H^{N-1}.
\end{equation}
Moreover, $\beta\mapsto u_\beta$ is continuous from $\R$ to $W^{1,p}(\Omega)$, and $\lambda_p(\beta,\Omega)$ has the same sign as $\beta$, with
\begin{equation}\label{bound:sign}
-\infty<\lambda_p(\beta,\Omega)<\lambda_p^D(\Omega),
\end{equation}
and
\begin{equation*}\label{limiti}
\lim_{\beta\to-\infty}\lambda_p(\beta,\Omega)=-\infty,\qquad \lim_{\beta\to 0}\lambda_p(\beta,\Omega)=0,\qquad \lim_{\beta\to+\infty}\lambda_p(\beta,\Omega)=\lambda_p^D(\Omega).
\end{equation*}

Moreover, $\int_{\de\Omega}u_\beta^p\,d\mathcal H^{N-1}>0$ for every $\beta\in\R$. Consequently, the map $\beta\mapsto\lambda_p(\beta,\Omega)$ is strictly increasing. Finally,
\begin{equation}
\label{eq:strong_ubeta_dirichlet}
u_\beta\longrightarrow u_\infty
\quad\text{strongly in }W^{1,p}(\Omega),
\end{equation}
where $u_{\infty}$ is the first positive normalized Dirichlet eigenfunction.
\end{prop}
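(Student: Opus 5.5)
Existence, uniqueness and the elementary monotonicity and concavity properties will come straight from the direct method and the structure of the Rayleigh quotient $\mathcal R_\beta(w)$ in \eqref{variationalaut}. Finiteness for $\beta\ge0$ is trivial. For $\beta<0$ we use Lemma \ref{traceineq} with $\varepsilon=1/(2|\beta|)$, which gives
\[
\int_\Omega|\nabla w|^p+\beta\int_{\de\Omega}|w|^p\ge \tfrac12\int_\Omega|\nabla w|^p-|\beta|C(\varepsilon)\int_\Omega|w|^p .
\]
Hence $\lambda_p(\beta,\Omega)\ge -|\beta|C(\varepsilon)$, and every minimizing sequence normalized in $L^p$ is bounded in $W^{1,p}$. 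By the compact embeddings $W^{1,p}\hookrightarrow L^p(\Omega)$ and $W^{1,p}\hookrightarrow L^p(\de\Omega)$ (the trace is compact on Lipschitz domains), together with weak lower semicontinuity of the gradient term, the infimum is attained. Since $\mathcal R_\beta(|w|)=\mathcal R_\beta(w)$, we may take a nonnegative minimizer $u$. It solves \eqref{eigenvalue_problem} weakly, is bounded by Moser iteration, and is positive in $\Omega$ by Harnack's inequality (Vázquez strong maximum principle). For uniqueness up to multiples we use the hidden convexity argument: for two positive normalized minimizers $u,v$, the function $w=((u^p+v^p)/2)^{1/p}$ satisfies the pointwise inequality $|\nabla w|^p\le \frac12(|\nabla u|^p+|\nabla v|^p)$, with equality only where $\nabla\log u=\nabla\log v$. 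The boundary and $L^p$ terms are linear in $u^p$ and $v^p$, so $w$ is also a minimizer and equality holds a.e. This gives $u/v$ constant, by connectedness. A sign-changing minimizer would have $|u|$ as a minimizer vanishing somewhere in $\Omega$, which contradicts positivity.

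For monotonicity and concavity, note that $\lambda_p(\beta,\Omega)=\inf_{\|w\|_p=1}\big(A(w)+\beta B(w)\big)$ is an infimum of affine functions of $\beta$ with $B\ge0$. It is therefore concave and nondecreasing. Testing with constants gives $\lambda_p(\beta,\Omega)\le\beta P(\Omega)/|\Omega|$, and for $\beta>0$ the minimum is positive: otherwise $\nabla u=0$ and $u=0$ on $\de\Omega$. Together this gives the sign statement. Testing with $u_\infty$ gives $\lambda_p(\beta,\Omega)\le\lambda_p^D(\Omega)$. The inequality is strict because equality would force $u_\infty$ to be a Robin eigenfunction, whence $|\nabla u_\infty|^{p-2}\de_\nu u_\infty=0$ on $\de\Omega$, contradicting the Hopf lemma. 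To keep this valid on Lipschitz sets, I would instead argue by strict monotonicity, as below. Differentiability follows from the envelope argument. Comparing $\mathcal R_{\beta+h}(u_\beta)$ and $\mathcal R_\beta(u_{\beta+h})$ gives
\[
h\,B(u_{\beta+h})\le \lambda_p(\beta+h,\Omega)-\lambda_p(\beta,\Omega)\le h\,B(u_\beta).
\]
Continuity of $\beta\mapsto u_\beta$ in $W^{1,p}$ then yields \eqref{derivata}, and it makes the derivative continuous. That continuity is obtained by compactness: $u_{\beta_n}$ is bounded in $W^{1,p}$, a subsequence converges weakly to a normalized minimizer for $\beta$, which is $u_\beta$ by uniqueness, and convergence of the energies upgrades this to strong convergence by uniform convexity. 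Next, $B(u_\beta)>0$, because $B(u_\beta)=0$ would make $u_\beta\in W^{1,p}_0$ an eigenfunction with zero conormal derivative, contradicting Hopf or unique continuation for positive supersolutions. I expect this to be the main delicate point on Lipschitz domains. What I would try first: if $u_\beta$ vanishes on $\de\Omega$, extend it by zero to a larger ball $B\supset\overline\Omega$. The Neumann condition makes the extension a nonnegative weak supersolution of $-\Delta_p w\ge -c\,w^{p-1}$ in $B$, which vanishes on an open set, contradicting the strong maximum principle. Strict monotonicity and the strict inequality $\lambda_p<\lambda_p^D$ then follow, the latter since $\lambda_p(\beta,\Omega)<\lambda_p(\beta+1,\Omega)\le\lambda_p^D(\Omega)$.

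For the limits, $\beta\to0$ follows from continuity of the concave function, since $\lambda_p(0,\Omega)=0$. As $\beta\to-\infty$, the bound $\lambda_p\le\beta P/|\Omega|$ gives $-\infty$. As $\beta\to+\infty$, the $u_\beta$ are bounded in $W^{1,p}$ because $\lambda_p\le\lambda_p^D$. Moreover $\int_{\de\Omega}u_\beta^p\le\lambda_p^D/\beta\to0$, so any weak limit lies in $W^{1,p}_0$ with unit $L^p$ norm. Lower semicontinuity gives $\lambda_p^D\le\liminf\lambda_p(\beta,\Omega)\le\lambda_p^D$, so the limit is $u_\infty$ by uniqueness. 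Convergence of the gradient norms then gives \eqref{eq:strong_ubeta_dirichlet}.
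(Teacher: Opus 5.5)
Your proposal is correct and follows the paper's proof almost step by step: the direct method with Lemma \ref{traceineq} for $\beta<0$, Harnack plus hidden convexity for simplicity, the infimum of affine maps for concavity and monotonicity, the two-sided envelope inequality together with $W^{1,p}$-continuity of $\beta\mapsto u_\beta$ for \eqref{derivata}, and the same compactness and energy-convergence argument as $\beta\to+\infty$.

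The one genuine difference is the proof that $\int_{\de\Omega}u_\beta^p\,d\mathcal H^{N-1}>0$, the step you flag as delicate. The paper avoids any maximum principle there. It tests the weak form of \eqref{eigenvalue_problem} with $\varphi\equiv1$, which gives $\beta\int_{\de\Omega}u_\beta^{p-1}\,d\mathcal H^{N-1}=\lambda_p(\beta,\Omega)\int_\Omega u_\beta^{p-1}\,dx$. For $\beta\neq0$ the right-hand side is nonzero, because $\lambda_p(\beta,\Omega)$ has the sign of $\beta$ (already proved) and $u_\beta>0$. The case $\beta=0$ is trivial since $u_0$ is constant.

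Your zero-extension argument is also valid on Lipschitz sets. Zero trace gives $u_\beta\in W^{1,p}_0(\Omega)$. Since the Robin weak formulation holds for every $\varphi\in W^{1,p}(\Omega)$, the extension $\tilde u$ is in fact a weak solution of $-\Delta_p\tilde u=\lambda_p(\beta,\Omega)\tilde u^{p-1}$ in the whole ball $B$, not just a supersolution. It is nonnegative and vanishes on the open set $B\setminus\overline\Omega$, so V\'azquez's strong maximum principle (or Trudinger's weak Harnack inequality) for $-\Delta_p w+|\lambda|w^{p-1}\ge0$ forces $\tilde u\equiv0$.

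Your route does not use the sign of $\lambda_p$: it shows that no nontrivial nonnegative Robin eigenfunction can have zero trace, whatever the eigenvalue. The price is the characterization of zero trace by $W^{1,p}_0$ and a strong maximum principle applied across $\de\Omega$. The paper's route is a one-line integral identity that reuses the sign information.

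Your strict bound $\lambda_p(\beta,\Omega)<\lambda_p(\beta+1,\Omega)\le\lambda_p^D(\Omega)$ is equivalent to the paper's strict-monotonicity argument. You were right to drop the Hopf-lemma version, which is not available on Lipschitz domains.
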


}
\section{The case \texorpdfstring{$\beta\to +\infty$}{beta to +infty}}
\label{limit_infinity}
Throughout this section, devoted to proving Theorem \ref{main_theorem}, we set
\[
\gamma:=\frac{1}{p-1}.
\]
We first analyze the behavior of $u_\beta$ on $\de\Omega$. We have the following.
\begin{prop}
\label{boundary:prop}
Let $\Omega\subseteq\R^N$ be an open, bounded, connected set with $C^{1,1}$ boundary.
   Let $\beta>0$, and $u_\beta$ be the first positive eigenfunction of the Robin $p$-Laplacian on $\Omega$, normalized such that
     \[\norm{u_\beta}_{L^p(\Omega)}=1.\]
    Then, there exists a constant $C>0$, independent of  $\beta \ge 1$, such that $\norm{u_\beta}_{L^\infty(\Omega)}\le C$ and
    \begin{equation*}
        \label{boundary:ubeta}
       \norm{u_\beta}  _{L^\infty(\de\Omega)} \le C \beta^{-\gamma}.
    \end{equation*}
\end{prop}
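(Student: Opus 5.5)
The proof has two steps: a uniform $L^\infty$ bound in $\Omega$, then a uniform gradient bound up to the boundary. The boundary estimate then comes directly from the Robin condition. Throughout, $\beta\ge1$.

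\emph{Uniform $L^\infty$ bound.} Since $\lambda_p(\beta,\Omega)<\lambda_p^D(\Omega)$ by \eqref{bound:sign}, we have
\[
\int_\Omega|\nabla u_\beta|^p\,dx+\beta\int_{\de\Omega}u_\beta^p\,d\mathcal H^{N-1}\le\lambda_p^D(\Omega).
\]
So $u_\beta$ is bounded in $W^{1,p}(\Omega)$ uniformly in $\beta\ge1$. Next we run a Moser iteration (or De Giorgi truncation) on $-\Delta_p u_\beta=\lambda u_\beta^{p-1}$ with $\lambda\le\lambda_p^D(\Omega)$. We test with $u_\beta^{1+kp}$, or with $(u_\beta-k)_+$. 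Because $\beta>0$, the boundary term $\beta\int_{\de\Omega}u_\beta^{p-1}\varphi$ is nonnegative and can simply be dropped. The Sobolev embedding of $W^{1,p}(\Omega)$ on the Lipschitz domain then gives the standard iteration. The resulting constants depend only on $N$, $p$, $\Omega$ and $\lambda_p^D(\Omega)$, not on $\beta$. This yields $\|u_\beta\|_{L^\infty(\Omega)}\le C$.

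\emph{Uniform gradient bound.} This is the main obstacle. We want $\|\nabla u_\beta\|_{L^\infty(\Omega)}\le C$ independently of $\beta$. Once we have it, the boundary condition gives on $\de\Omega$
\[
\beta u_\beta^{p-1}=-|\nabla u_\beta|^{p-2}\de_\nu u_\beta\le|\nabla u_\beta|^{p-1}\le C^{p-1},
\]
hence $u_\beta\le C\beta^{-\gamma}$ on $\de\Omega$.

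\emph{First attempt: Lieberman's regularity.} We would invoke Lieberman's global $C^{1,\alpha}$ regularity for conormal problems on $C^{1,1}$ (indeed $C^{1,\alpha}$) domains. The boundary datum is $g(x,u)=-\beta u^{p-1}$, and its $C^{\alpha}$ norm grows with $\beta$. So a direct application gives only $\beta$-dependent bounds, and more work is needed.

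\emph{Alternative: barrier argument.} The cleanest route is to avoid the gradient altogether. We compare $u_\beta$ with a supersolution $w=A\,\psi$, where $\psi$ is built from the distance function $d$, which is $C^{1,1}$ near $\de\Omega$. Concretely, take $\psi=\beta^{-\gamma}+h(d)$ with $h$ increasing, $h(0)=0$ and $h'(0)=1$. We need:
\begin{itemize}
\item $-\Delta_p w\ge\lambda_p^D(\Omega)\|u_\beta\|_\infty^{p-1}$ in a tubular neighborhood $\{d<\delta\}$; this is possible with $h$ concave, e.g.\ $h(t)=t-Kt^2$, using the bounded curvature of $\de\Omega$;
\item $w\ge C$ on $\{d=\delta\}$;
\item on $\de\Omega$, $|\nabla w|^{p-2}\de_\nu w+\beta w^{p-1}=-A^{p-1}+\beta A^{p-1}\beta^{-1}\ge0$, which holds since $\de_\nu d=-1$ and $w=A\beta^{-\gamma}$ there.
\end{itemize}
Since $u_\beta$ is a subsolution of $-\Delta_p u\le\lambda_p^D\|u_\beta\|_\infty^{p-1}$ with the Robin condition, the comparison principle for the $p$-Laplacian with a monotone Robin term in $\{d<\delta\}$ gives $u_\beta\le w$. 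Restricting to $\de\Omega$ yields $u_\beta\le A\beta^{-\gamma}$.

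\emph{Possible gap.} The supersolution inequality may fail exactly where $\nabla w$ degenerates. If so, we would fall back on the gradient route. There we rescale: scaling $x\mapsto x/r$ with $r\sim\beta^{-\gamma}$ near the boundary puts the Robin coefficient at unit size, and local Lieberman estimates on scale-one balls then give a uniform gradient bound.
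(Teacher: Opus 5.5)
Your argument is correct and follows the paper's proof: you obtain a $\beta$-independent $L^\infty$ bound by Moser/De Giorgi iteration after discarding the nonnegative Robin term, and then compare $u_\beta$ in the collar $\{d<\delta\}$ with the barrier $A(\beta^{-\gamma}+d-Kd^2)$ (the paper takes $K=1/(4\delta)$), which satisfies the Robin condition with equality on $\de\Omega$ and dominates $u_\beta$ on $\{d=\delta\}$. Your ``possible gap'' does not occur, because choosing $\delta\le 1/(4K)$ gives $h'(d)\ge 1/2$ on the whole collar, hence $|\nabla w|\ge A/2$ and $-\Delta_p w\ge cA^{p-1}$. The rescaling fallback is therefore unnecessary; as sketched it would only give $|\nabla u_\beta|\lesssim\beta^{\gamma}\|u_\beta\|_\infty$ near $\de\Omega$, not a uniform gradient bound.
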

\begin{proof}
We first recall that the family $\{u_\beta\}_{\beta\ge1}$ is uniformly bounded
in $L^\infty(\Omega)$. Indeed, testing the equation with the usual
truncations $(u_\beta-k)^+$, the Robin boundary term has a good sign:
\[
\beta\int_{\de\Omega}u_\beta^{p-1}(u_\beta-k)^+
\,d\mathcal H^{N-1}\ge0.
\]
Thus it can be discarded in the Caccioppoli estimates. Since
$0\le\lambda_p(\beta,\Omega)\le\lambda_p^D(\Omega)$ and
$\|u_\beta\|_{L^p(\Omega)}=1$, the standard Moser iteration (see, e.g.,
\cite{Le2006Eigenvalue}), gives
\[
\|u_\beta\|_{L^\infty(\Omega)}\le C,
\]
where the constant $C$ depends only on $N,p,\Omega$ and on $\lambda_p^D(\Omega)$. In particular, $C$ is independent on $\beta\ge1$.
It follows that
$
\lambda_{p}(\beta,\Omega)\, u_{\beta}^{p-1}
$
is uniformly bounded in $L^{\infty}(\Omega)$.

To establish the precise asymptotic behavior of the boundary trace of $u_\beta$, we  use a barrier argument. Let  $d(x) = \dist(x, \de\Omega)$  be the distance to the boundary of $\Omega$. Since $\Omega$ is of class \(C^{1,1}\), $d$ has the same regularity in a tubular neighborhood $\Omega_\delta = \{x \in \Omega : d(x) < \delta\}$ for a small $\delta > 0$.

Let us consider 
\[
v(x) := C \left(\beta^{-\gamma} + d(x) - \frac{1}{4\delta} d(x)^2\right),
\]
where  $C > 0$ is a constant independent of $\beta$ to be determined. 

We claim that $v$ is a supersolution to the equation \eqref{eigenvalue_problem} in $\Omega_\delta$, for some $C>0$. Indeed, on the boundary  $\de\Omega$, we have $d(x)=0$ and $\nabla d = -\nu$. Hence $\nabla v = -C\nu$, meaning $|\nabla v| = C$ and  $-\frac{\de v}{\de \nu} = C$. So, it holds that
\[
|\nabla v|^{p-2}\left(- \frac{\de v}{\de\nu}\right) = C^{p-1}=\beta v^{p-1},
\]
and  the last equality follows from $v=C\beta^{-\gamma}$ on  $\de\Omega$ and $\gamma(p-1)=1$.
Thus, $v$ satisfies Robin boundary conditions.

 In $\Omega_\delta$, the function $d$ admits finite second derivatives almost everywhere, hence, a direct computation of the $p$-Laplacian gives:
\[
\Delta_p v = C^{p-1} \left(1 - \frac{d}{2\delta} \right)^{p-2} \left[ \left(1 - \frac{d}{2\delta} \right) \Delta d - \frac{p-1}{2\delta} \right].
\]
Since $\Delta d \le K$ in $\Omega_\delta$, we can choose $\delta$ sufficiently small such that $K - \frac{p-1}{2\delta} \le -c_0 < 0$, and we get
\[
-\Delta_p v \ge \tilde c_0 C^{p-1}
\]
for some positive constant $\tilde c_0$.

  Thus, since $ \lambda_{p}(\beta,\Omega)\, u_{\beta}^{p-1} $ is uniformly bounded in $L^{\infty}(\Omega)$, we can choose $C$ large enough such that $-\Delta_p v \ge \lambda_p(\beta,\Omega) u_\beta^{p-1}$.

 On the inner boundary $d(x) = \delta$, we have $v = C(\beta^{-\gamma} + \frac{3}{4}\delta)$. Since $\delta$ is fixed, by potentially enlarging $C$ further, we can guarantee that $v \ge \|u_\beta\|_{L^\infty(\Omega)} \ge u_\beta$ { on $\{d=\delta\}$}.

Then, it holds that
\[
u_\beta(x) \le v(x) \quad\text{in }\Omega_\delta.
\]
{ Indeed, subtracting the weak formulations of $u_\beta$ and $v$ in $\Omega_\delta$ tested with $\varphi = (u_\beta - v)^+$ (which vanishes on $\{d=\delta\}$), and using the boundary conditions on  $\de\Omega$, we obtain
\begin{multline*}
\int_{\{u_\beta > v\}} (|\nabla u_\beta|^{p-2}\nabla u_\beta - |\nabla v|^{p-2}\nabla v) \cdot (\nabla u_\beta - \nabla v) \, dx \\+  \beta \int_{\de\Omega \cap \{u_\beta > v\}} (u_\beta^{p-1} - v^{p-1}) (u_\beta - v) \, d\mathcal{H}^{N-1}
 \le 0.
\end{multline*}
Both terms on the left-hand side are nonnegative, by the monotonicity of the maps $\xi\mapsto\abs{\xi}^{p-2}\xi$ and $t\mapsto t^{p-1}$; hence both vanish. In particular $\nabla(u_\beta-v)^+=0$ a.e. in $\Omega_\delta$, so that $(u_\beta-v)^+$ is constant on each connected component of $\Omega_\delta$; since it vanishes on $\{d=\delta\}$, it is identically zero, that is $u_\beta \le v$ in $\Omega_\delta$.}

 Evaluating this inequality where $d(x)=0$ yields $u_\beta \le C \beta^{-\gamma}$ on  $\de\Omega$. Since $u_\beta > 0$, we have proven that 
\[
\|u_\beta\| _{L^\infty(\de\Omega)} = O(\beta^{-\gamma}).
\]
 \end{proof}

As anticipated in the introduction, the proof of Proposition \ref{upper_bound} 
relies on the construction of suitable test functions for the variational characterizations \eqref{variationalaut} and \eqref{vardiri}. In both cases, a crucial role is played by the function $w$ introduced in the following proposition.

\begin{prop}
\label{trace}
Let $\Omega\subseteq\mathbb R^N$ be an open, bounded, connected set with
$C^{1,1}$ boundary, and let $u_\infty$ be the positive first eigenfunction of
the Dirichlet $p$-Laplacian, normalized by $\|u_\infty\|_{L^p(\Omega)}=1.$ Then, there exists a function
$w\in W^{1,p}(\Omega)\cap L^\infty(\Omega)$ such that
\[
w=|\nabla u_\infty|
\qquad\text{on }  \de \Omega.
\]
\end{prop}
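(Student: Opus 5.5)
The plan is to take $w$ to be $\abs{\nabla u_\infty}$ itself, restricted to a neighbourhood of the boundary and cut off away from it. For this we need two regularity facts about $u_\infty$.

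\emph{Step 1: global $C^{1,\alpha}$ regularity.} Since $\partial\Omega$ is $C^{1,1}$ and $u_\infty$ is bounded, the right-hand side $\lambda_p^D(\Omega)u_\infty^{p-1}$ lies in $L^\infty(\Omega)$. Lieberman's boundary regularity then gives $u_\infty\in C^{1,\alpha}(\overline\Omega)$. Hence $\abs{\nabla u_\infty}$ is continuous up to the boundary, its trace on $\partial\Omega$ is defined pointwise, and it is bounded.

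\emph{Step 2: Sobolev regularity of $\abs{\nabla u_\infty}$ near $\partial\Omega$.} By the Hopf lemma for the $p$-Laplacian (Vázquez's strong maximum principle, which applies because $-\Delta_p u_\infty\ge0$, $u_\infty>0$ and $\Omega$ satisfies the interior sphere condition), we have $\partial u_\infty/\partial\nu<0$ on $\partial\Omega$. By continuity, $\abs{\nabla u_\infty}\ge c_0>0$ in $\overline{\Omega_\delta}$ for some small $\delta>0$. On $\Omega_\delta$ the operator is therefore uniformly elliptic with coefficients $\abs{\nabla u}^{p-2}$ bounded above and below.

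Standard difference-quotient arguments then give $u_\infty\in W^{2,2}_{loc}$ in the nondegenerate region, with local $W^{2,2}$ estimates for the linearized operator. Combined with the $C^{1,1}$ boundary, boundary $W^{2,2}$ estimates (flattening and tangential difference quotients, or the known global $W^{2,2}$-type results for the $p$-Laplacian away from critical points) should give $u_\infty\in W^{2,2}(\Omega_\delta)$. This is the step I expect to be the main obstacle: boundary $W^{2,q}$ regularity up to $\partial\Omega$. The nondegeneracy is what makes it tractable, since the equation can be rewritten in nondivergence form
\[
\abs{\nabla u}^{p-2}\Bigl(\Delta u+(p-2)\frac{\langle D^2u\,\nabla u,\nabla u\rangle}{\abs{\nabla u}^2}\Bigr)=-\lambda u^{p-1},
\]
with continuous coefficients. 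Calderón–Zygmund theory for uniformly elliptic equations with continuous coefficients on $C^{1,1}$ domains would then give $u_\infty\in W^{2,q}(\Omega_{\delta})$ for every $q<\infty$, in particular for $q=p$. I would first try to justify this by approximation: regularize the operator, obtain uniform $W^{2,q}$ bounds, and pass to the limit.

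\emph{Step 3: conclusion.} With $u_\infty\in W^{2,p}(\Omega_\delta)$ and $\abs{\nabla u_\infty}\ge c_0$, the chain rule gives
\[
\nabla\abs{\nabla u_\infty}=\frac{D^2u_\infty\,\nabla u_\infty}{\abs{\nabla u_\infty}}\in L^p(\Omega_\delta).
\]
Take a cutoff $\eta\in C^\infty(\overline\Omega)$ with $\eta=1$ near $\partial\Omega$ and $\eta=0$ on $\{d\ge\delta/2\}$, and set
\[
w=\eta\,\abs{\nabla u_\infty}.
\]
Then $w\in W^{1,p}(\Omega)\cap L^\infty(\Omega)$. Since $w$ is continuous up to $\partial\Omega$, its trace coincides with the pointwise boundary value $\abs{\nabla u_\infty}$.

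Even without the full $W^{2,p}$ regularity, there is a fallback: the $C^{1,\alpha}$ trace $\abs{\nabla u_\infty}|_{\partial\Omega}$ is Hölder, hence lies in $W^{1-1/p,p}(\partial\Omega)$ whenever $\alpha>1-1/p$. The inverse trace theorem would then supply $w\in W^{1,p}(\Omega)$, and truncation keeps it bounded. To use this route I would try to show that $\alpha$ can be taken close to $1$ near the boundary, which follows from the nondegenerate $W^{2,q}$ estimate above.
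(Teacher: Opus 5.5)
Your proposal is correct and follows essentially the paper's route. Hopf's lemma gives $\abs{\nabla u_\infty}\ge c>0$ in a tubular neighbourhood, where the equation is uniformly elliptic, and $W^{2,q}$ regularity up to the $C^{1,1}$ boundary (the step you flag as the main obstacle, which the paper simply quotes from standard $L^q$ elliptic theory) then gives $\abs{\nabla u_\infty}\in W^{1,p}$ near $\de\Omega$.

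The only difference is the last step. The paper passes to the trace space $W^{1-1/p,p}(\de\Omega)$, invokes an inverse trace theorem and then truncates. Your cutoff $w=\eta\abs{\nabla u_\infty}$ is already bounded and continuous up to $\de\Omega$, so it avoids the extension theorem and makes the identification of the trace immediate.
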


\begin{proof}
By the boundary $C^{1,\alpha}$-regularity of $u_\infty$ and Hopf's lemma
(see \cite{DiB,Lieberman,V}), and the fact that   $\de\Omega=\{u_{\infty}=0\}$ , we have 
\[|\nabla u_\infty| =-  \frac{\de u_\infty}{\de\nu} \ge c_1> 0,  \text{ on }   \de \Omega.
\]
In particular, there exists a tubular neighborhood $\Omega_\delta$ of   $\de \Omega$   in which $|\nabla u_\infty|\ge c_1/2$.

Hence in $\Omega_\delta$, the equation is uniformly elliptic and non-degenerate. Moreover, { since}   $\de\Omega$  is $C^{1,1}$, the standard $L^q$ elliptic regularity theory guarantees that $u_\infty \in W^{2,q}(\Omega_\delta)$ for any $q<\infty$ (see for instance \cite[Theorem 6.5]{ladyzhenskaya}).

In particular, taking $q \ge p$, the trace $|\nabla u_\infty|$ belongs to   $W^{1-1/p,p}(\de\Omega)$ , which ensures the existence of a global extension $w \in W^{1,p}(\Omega)$ of $|\nabla u_\infty|$ (see \cite[Chapter 2, Theorem 5.7]{N}); since   $|\nabla u_\infty|\in L^\infty(\de\Omega)$ , replacing $w$ with its truncation at the level   $\norm{\nabla u_\infty}_{L^\infty(\de\Omega)}$  if necessary, we may also assume $w\in L^\infty(\Omega)$.

Thus
\[
w\in W^{1,p}(\Omega)\cap L^\infty(\Omega),
\qquad
w=|\nabla u_\infty|
\quad\text{on }  \de \Omega.
\]

\end{proof}

As stated in the introduction, the proof of Theorem \ref{main_theorem} is a direct consequence of the following two steps: we prove that the gap $\lambda_p(\beta,\Omega)-\lambda_p^D(\Omega)$ is asymptotically bounded from above and from below  by the right-hand side of \eqref{robin:dirichlet:bound} in Propositions \ref{upper_bound} and \ref{lower_bound} respectively.

\begin{prop}
\label{upper_bound}
    Let $\Omega\subseteq\R^N$ be an open, bounded, connected set with $C^{1,1}$ boundary.  Let $\lambda_p(\beta,\Omega)$ and $\lambda_p^D(\Omega)$ be the first eigenvalues of the $p$-Laplacian with Robin and Dirichlet boundary conditions, respectively. Let $u_\infty$ be the first positive eigenfunction of the $p$-Laplacian with Dirichlet boundary conditions, normalized such that $\norm{u_\infty}_{L^p(\Omega)}=1$. { Then,}
    \begin{equation*}
 \lambda_p(\beta,\Omega)-\lambda_p^D(\Omega) \le -\frac{p-1}{\beta^\frac{1}{p-1}}\int  _{\de\Omega} \abs{\nabla u_\infty}^p\, d\mathcal{H}^{N-1} + o\left(\beta^{-\frac{1}{p-1}}\right),\qquad \beta\to +\infty.
    \end{equation*}
\end{prop}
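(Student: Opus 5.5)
We plan to plug into \eqref{variationalaut} the test function
\[
\phi_\beta:=u_\infty+t\,w,\qquad t:=\beta^{-\gamma},\quad \gamma=\tfrac{1}{p-1},
\]
where $w\in W^{1,p}(\Omega)\cap L^\infty(\Omega)$ is given by Proposition \ref{trace}. Then $\lambda_p(\beta,\Omega)\le \mathcal R_\beta(\phi_\beta)$, with $\mathcal R_\beta$ the Rayleigh quotient. The aim is to expand both numerator and denominator to first order in $t$. The heuristic is that near $\de\Omega$ the optimal correction raises the boundary value from $0$ to roughly $\abs{\nabla u_\infty}\beta^{-\gamma}$, consistent with the Robin condition $\abs{\nabla u}^{p-2}(-\de_\nu u)=\beta u^{p-1}$. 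Taking $w$ only up to $\Omega\setminus\Omega_\delta$ changes nothing at first order, since the relevant first variation is determined by boundary values alone.

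\textbf{Step 1 (numerator).} Since $\xi\mapsto\abs{\xi}^p$ is $C^1$ with derivative $p\abs{\xi}^{p-2}\xi$, dominated convergence gives
\[
\int_\Omega\abs{\nabla u_\infty+t\nabla w}^p\,dx=\int_\Omega\abs{\nabla u_\infty}^p\,dx+pt\int_\Omega\abs{\nabla u_\infty}^{p-2}\nabla u_\infty\cdot\nabla w\,dx+o(t).
\]
The domination follows from $\abs{\nabla u_\infty}+\abs{\nabla w}\in L^p$. We then integrate by parts using the equation $-\Delta_p u_\infty=\lambda_p^D u_\infty^{p-1}$. This is justified because $u_\infty\in C^{1,\alpha}(\overline\Omega)$ and the vector field $\abs{\nabla u_\infty}^{p-2}\nabla u_\infty$ has $L^\infty$ divergence. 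It gives
\[
\int_\Omega\abs{\nabla u_\infty}^{p-2}\nabla u_\infty\cdot\nabla w\,dx=\lambda_p^D\int_\Omega u_\infty^{p-1}w\,dx+\int_{\de\Omega}\abs{\nabla u_\infty}^{p-2}\frac{\de u_\infty}{\de\nu}\,w\,d\mathcal H^{N-1}.
\]
On $\de\Omega$ we have $\frac{\de u_\infty}{\de\nu}=-\abs{\nabla u_\infty}$ and $w=\abs{\nabla u_\infty}$, so the boundary integral equals $-\int_{\de\Omega}\abs{\nabla u_\infty}^p$. For the boundary term of the quotient, $u_\infty=0$ on $\de\Omega$ gives
\[
\beta\int_{\de\Omega}\abs{\phi_\beta}^p=\beta t^p\int_{\de\Omega}\abs{\nabla u_\infty}^p=t\int_{\de\Omega}\abs{\nabla u_\infty}^p,
\]
because $\beta t^{p-1}=1$.

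\textbf{Step 2 (denominator).} Similarly,
\[
\int_\Omega\abs{u_\infty+tw}^p\,dx=1+pt\int_\Omega u_\infty^{p-1}w\,dx+o(t),
\]
using $u_\infty\ge0$ and $w\in L^\infty$.

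\textbf{Step 3 (combine).} The numerator becomes
\[
\lambda_p^D+pt\lambda_p^D\int_\Omega u_\infty^{p-1}w-pt\int_{\de\Omega}\abs{\nabla u_\infty}^p+t\int_{\de\Omega}\abs{\nabla u_\infty}^p+o(t).
\]
Dividing by $1+pt\int u_\infty^{p-1}w+o(t)$, the terms $pt\lambda_p^D\int u_\infty^{p-1}w$ cancel. What remains is
\[
\mathcal R_\beta(\phi_\beta)=\lambda_p^D-(p-1)\,t\int_{\de\Omega}\abs{\nabla u_\infty}^p\,d\mathcal H^{N-1}+o(t),
\]
which is exactly the claim, since $t=\beta^{-\frac1{p-1}}$.

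The main delicate point is the first-order expansion of $\int\abs{\nabla u_\infty+t\nabla w}^p$ when $1<p<2$, together with the integration by parts with the merely $W^{1,p}$ function $w$. The expansion should still hold by the mean value theorem. For the integration by parts, the derivative difference quotient $\big(\abs{a+tb}^p-\abs{a}^p\big)/t$ is bounded by $p(\abs a+\abs b)^{p-1}\abs b\in L^1$, which gives the needed domination. The integration by parts can be checked by approximating $w$ by smooth functions in $W^{1,p}$ and using continuity of traces.
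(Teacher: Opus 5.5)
Your proposal is correct and follows essentially the same route as the paper. Both use the test function $u_\infty+\beta^{-\gamma}w$ with $w$ from Proposition \ref{trace}, the integration by parts producing $-\int_{\de\Omega}\abs{\nabla u_\infty}^p\,d\mathcal H^{N-1}$, the identity $\beta\cdot\beta^{-p\gamma}=\beta^{-\gamma}$ for the boundary term, and the cancellation of $p\beta^{-\gamma}\lambda_p^D(\Omega)\int_\Omega u_\infty^{p-1}w\,dx$ between numerator and denominator. The only difference is how you get the $o(\beta^{-\gamma})$ remainder in the gradient term: you use dominated convergence on difference quotients with the $L^1$ majorant $p(\abs{a}+\abs{b})^{p-1}\abs{b}$, while the paper uses the algebraic inequality of Lemma \ref{lem:algebraic_taylor}, which gives an explicit rate that the statement does not require.
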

\begin{proof}
Let $u_\beta$ and $u_\infty$ be the first positive normalized eigenfunctions with Robin and Dirichlet boundary conditions.

We will prove \eqref{robin:dirichlet:bound} by establishing an upper  bound for $\lambda_p(\beta,\Omega)-\lambda_p^D(\Omega)$. We construct a test function $\psi_\beta$ as a perturbation of the positive Dirichlet eigenfunction $u_\infty$:
    \[
    \psi_\beta = u_\infty + \beta^{-\gamma} w,
    \]
    where $w \in W^{1,p}(\Omega)$ is  the extension of $\abs{\nabla u_\infty}$ defined in Proposition \ref{trace}.

    We expand the denominator of the Rayleigh quotient \eqref{variationalaut} evaluated at $\psi_\beta$
by using a Taylor expansion for $f(t)=|t|^p$. As $\beta \to +\infty$,
    \begin{align}
        D[\psi_\beta] &= \int_\Omega |u_\infty + \beta^{-\gamma} w|^p dx \notag \\
        &= \int_\Omega u_\infty^p dx + p\beta^{-\gamma} \int_\Omega u_\infty^{p-1} w \, dx + o(\beta^{-\gamma}) \label{denominatore:espansione} \\
        &= 1 + p\beta^{-\gamma} C_w + o(\beta^{-\gamma}), \notag
    \end{align}
    where $C_w = \int_\Omega u_\infty^{p-1} w \, dx$.\\

  The numerator in the Rayleigh quotient \eqref{variationalaut} is
    \[
    N[\psi_\beta] = \int_\Omega |\nabla u_\infty + \beta^{-\gamma} \nabla w|^p dx + \beta \int  _{\de\Omega}  |u_\infty + \beta^{-\gamma} w|^p d \mathcal H^{N-1}.
    \]
The first term can be expanded by using the vector inequality of Lemma
\ref{lem:algebraic_taylor}, together with the boundedness of $\nabla u_\infty$, obtaining
    \begin{align*}
           \int_\Omega |\nabla u_\infty + \beta^{-\gamma} \nabla w|^p dx &= \int_\Omega |\nabla u_\infty|^p dx + p\beta^{-\gamma} \int_\Omega |\nabla u_\infty|^{p-2}\nabla u_\infty \cdot \nabla w \, dx + o(\beta^{-\gamma}) \\ &= \lambda_p^D(\Omega) + p\beta^{-\gamma} \int_\Omega |\nabla u_\infty|^{p-2}\nabla u_\infty \cdot \nabla w \, dx + o(\beta^{-\gamma}).
    \end{align*}
By integration by parts formula, recalling that $|\nabla u_\infty|^{p-2}\nabla u_\infty$ has a well defined normal trace on $\de \Omega$, that $w=|\nabla u_{\infty}|$ on $\de\Omega$ and that $u_\infty$ is a Dirichlet eigenfunction, we have
    \begin{equation*}
    \int_\Omega |\nabla u_\infty|^{p-2}\nabla u_\infty \cdot \nabla w \, dx =-\int  _{\de \Omega} \abs{\nabla u_{\infty}}^p\,d\mathcal{H}^{N-1}+\lambda_p^D(\Omega) \int_\Omega u_\infty^{p-1} w \, dx.
    \end{equation*}
        On the other hand, using   $u_\infty|_{\de\Omega}=0$  and the relation $\beta (\beta^{-\gamma})^p = \beta^{-\gamma}$, the boundary term in $N[\psi_\beta]$ becomes
    \[
    \beta \int  _{\de\Omega}  |u_\infty + \beta^{-\gamma} w|^p d \mathcal H^{N-1} = \beta^{-\gamma} \int  _{\de\Omega}  |w|^p d \mathcal H^{N-1}= \beta^{-\gamma} \int  _{\de\Omega}  \abs{\nabla u_\infty}^p d \mathcal H^{N-1}.
    \]
Then    
    \begin{align*}
        N[\psi_\beta] &= \lambda_p^D(\Omega)  \\ &\phantom{=}+ p\beta^{-\gamma} \left[ -\int  _{\de\Omega}  \abs{\nabla u_\infty}^p\, d \mathcal H^{N-1} + \lambda_p^D(\Omega) C_w \right] + \beta^{-\gamma} \int  _{\de\Omega}  |w|^p d \mathcal H^{N-1} + o(\beta^{-\gamma}) \\
        &= \lambda_p^D(\Omega) (1 + p\beta^{-\gamma} C_w) -(p-1) \beta^{-\gamma} \int  _{\de\Omega}  |\nabla u_\infty|^p d \mathcal H^{N-1}  + o(\beta^{-\gamma})\\
        &= \lambda_p^D(\Omega)D[\psi_\beta] - (p-1)\beta^{-\gamma} \int  _{\de\Omega}  |\nabla u_\infty|^p  d \mathcal H^{N-1}  + o(\beta^{-\gamma}),
    \end{align*}
    where in the last equality we used \eqref{denominatore:espansione}. Hence, computing the Rayleigh quotient, we get
    \begin{align*}
        \lambda_p(\beta,\Omega)  &\le \frac{ \displaystyle N[\psi_\beta] }{ \displaystyle D[\psi_\beta] } \\ &= \lambda_p^D(\Omega) -   \frac{ \displaystyle \beta^{-\gamma} (p-1)\int_{\de\Omega} \abs{\nabla u_{\infty}}^p d \mathcal H^{N-1} + o(\beta^{-\gamma}) }{ \displaystyle D[\psi_\beta] }  \\
 &= \lambda_p^D(\Omega) -\beta^{-\gamma} (p-1)\int  _{\de\Omega}   \abs{\nabla u_{\infty}}^p  d \mathcal H^{N-1} + o(\beta^{-\gamma}),
    \end{align*}
    where  we used \eqref{denominatore:espansione} again. The proof is completed.
\end{proof}

\begin{prop}
\label{lower_bound}
Under the assumptions of Proposition \ref{upper_bound}, it holds that
    \begin{equation}
    \label{robin:dirichlet:bound2}
        \lambda_p(\beta,\Omega)-\lambda_p^D(\Omega) \ge -\frac{p-1}{\beta^\frac{1}{p-1}}\int  _{\de\Omega} \abs{\nabla u_\infty}^p\, d\mathcal{H}^{N-1} + o\left(\beta^{-\frac{1}{p-1}}\right),\quad\beta\to+\infty.
    \end{equation}
 \end{prop}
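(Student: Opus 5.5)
The plan is to run the argument of Proposition \ref{upper_bound} in reverse. I will insert a perturbation of the \emph{Robin} eigenfunction $u_\beta$ into the \emph{Dirichlet} Rayleigh quotient \eqref{vardiri}, and then pass to the limit in the boundary integral that appears. On $\de\Omega$ the Robin condition gives $|\nabla u_\beta|^{p-1}=\beta u_\beta^{p-1}$, that is,
\[
u_\beta=\beta^{-\gamma}|\nabla u_\beta|\qquad\text{on }\de\Omega.
\]
So, if $W_\beta\in W^{1,p}(\Omega)\cap L^\infty(\Omega)$ extends $|\nabla u_\beta|\big|_{\de\Omega}$ (built as in Proposition \ref{trace}), the function
\[
\varphi_\beta:=u_\beta-\beta^{-\gamma}W_\beta
\]
belongs to $W^{1,p}_0(\Omega)$ and is admissible in \eqref{vardiri}. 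The needed input is uniform control of $W_\beta$: $\sup_{\beta\ge1}\big(\|W_\beta\|_{W^{1,p}}+\|W_\beta\|_{L^\infty}\big)<\infty$. This would follow from a bound on $\|u_\beta\|_{C^{1,\alpha}(\overline\Omega)}$ that is uniform in $\beta\ge1$, together with a Hopf-type lower bound $|\nabla u_\beta|\ge c>0$ near $\de\Omega$, so that the $W^{2,q}$ argument of Proposition \ref{trace} applies uniformly.

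Next I expand the Dirichlet quotient exactly as in the proof of Proposition \ref{upper_bound}, using Lemma \ref{lem:algebraic_taylor} and the uniform bounds; set $C_\beta=\int_\Omega u_\beta^{p-1}W_\beta\,dx$.
\begin{itemize}
\item For the denominator, $\int_\Omega|\varphi_\beta|^p\,dx=1-p\beta^{-\gamma}C_\beta+o(\beta^{-\gamma})$.
\item For the numerator, integrating by parts with the weak Robin formulation gives
\[
\int_\Omega|\nabla u_\beta|^{p-2}\nabla u_\beta\cdot\nabla W_\beta\,dx=\lambda_p(\beta,\Omega)\,C_\beta-\int_{\de\Omega}|\nabla u_\beta|^p\,d\mathcal H^{N-1}.
\]
\item The energy identity gives
\[
\int_\Omega|\nabla u_\beta|^p\,dx=\lambda_p(\beta,\Omega)-\beta\int_{\de\Omega}u_\beta^p\,d\mathcal H^{N-1}=\lambda_p(\beta,\Omega)-\beta^{-\gamma}\int_{\de\Omega}|\nabla u_\beta|^p\,d\mathcal H^{N-1}.
\]
\end{itemize}
Putting these together,
\[
\int_\Omega|\nabla\varphi_\beta|^p\,dx=\lambda_p(\beta,\Omega)\big(1-p\beta^{-\gamma}C_\beta\big)+(p-1)\beta^{-\gamma}\int_{\de\Omega}|\nabla u_\beta|^p\,d\mathcal H^{N-1}+o(\beta^{-\gamma}).
\]
Dividing by the denominator yields
\[
\lambda_p^D(\Omega)\le\lambda_p(\beta,\Omega)+(p-1)\beta^{-\gamma}\int_{\de\Omega}|\nabla u_\beta|^p\,d\mathcal H^{N-1}+o(\beta^{-\gamma}).
\]
To conclude \eqref{robin:dirichlet:bound2} it then remains to show
\[
\int_{\de\Omega}|\nabla u_\beta|^p\,d\mathcal H^{N-1}\to\int_{\de\Omega}|\nabla u_\infty|^p\,d\mathcal H^{N-1}.
\]
This follows from the uniform $C^{1,\alpha}(\overline\Omega)$ bound via Ascoli–Arzelà: $u_\beta\to u_\infty$ in $C^1(\overline\Omega)$, and the limit is identified by \eqref{eq:strong_ubeta_dirichlet}.

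The main obstacle is the uniform $C^{1,\alpha}$ estimate up to the boundary. Interior and $L^\infty$ control come from Proposition \ref{boundary:prop}. That proposition also gives that the conormal datum $g_\beta=\beta u_\beta^{p-1}$ is bounded uniformly in $L^\infty(\de\Omega)$, since $\beta u_\beta^{p-1}\le C^{p-1}$. However, Lieberman's conormal regularity theory needs some Hölder or structural control of $g_\beta$ in $(x,u)$. Its Lipschitz constant in $u$, of order $\beta u_\beta^{p-2}$, blows up as $\beta\to+\infty$. My first attempt would use the form of Lieberman's estimates for nonlinear conormal problems in which $g(x,z)$ is nondecreasing in $z$ and only bounded. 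The monotonicity of $z\mapsto\beta z^{p-1}$ is the favorable sign, which should give gradient bounds independent of $\beta$.

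Should that fail, I would look for a weaker but sufficient substitute. Two-sided barriers, namely the one of Proposition \ref{boundary:prop} and a matching subsolution built from $u_\infty$, would give $u_\beta=O(\beta^{-\gamma})$ uniformly near $\de\Omega$ with gradient bounds there. Combined with the $W^{1,p}$ convergence \eqref{eq:strong_ubeta_dirichlet} and a trace argument, this would only give convergence of $|\nabla u_\beta|$ in $L^p(\de\Omega)$. That suffices for the boundary integral. For the $o(\beta^{-\gamma})$ remainders, uniform boundedness of $W_\beta$ in $W^{1,p}\cap L^\infty$ would still be needed, and this would have to come from the same barrier and gradient bounds.
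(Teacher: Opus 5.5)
\textbf{There is a false identity at the start, and after repairing it the argument still rests on an estimate you flag but do not prove.}

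\emph{The false identity.} On $\de\Omega$ the Robin condition prescribes only the conormal component, $\abs{\nabla u_\beta}^{p-2}(-\de_\nu u_\beta)=\beta u_\beta^{p-1}$. Unlike $u_\infty$, the function $u_\beta$ is not constant along $\de\Omega$ (heuristically $u_\beta\approx\beta^{-\gamma}\abs{\nabla u_\infty}$ there). So its tangential gradient does not vanish and $\abs{\nabla u_\beta}\neq-\de_\nu u_\beta$. Hence $u_\beta\neq\beta^{-\gamma}\abs{\nabla u_\beta}$ on $\de\Omega$, your $\varphi_\beta$ is not in $W^{1,p}_0(\Omega)$, and your second and third bullets are wrong as written.

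\emph{The repair.} Let $W_\beta$ extend $g_\beta:=\beta^{\gamma}u_\beta|_{\de\Omega}$ instead. Using only the weak formulation and $1+\gamma=p\gamma$, your three bullets then hold with $\int_{\de\Omega}\abs{\nabla u_\beta}^p$ replaced by $\int_{\de\Omega}g_\beta^p=\beta^{p'}\int_{\de\Omega}u_\beta^p$. This gives $\lambda_p^D(\Omega)\le\lambda_p(\beta,\Omega)+(p-1)\beta^{-\gamma}\int_{\de\Omega}g_\beta^p\,d\mathcal H^{N-1}+o(\beta^{-\gamma})$, provided two things hold:
(a) $\norm{W_\beta}_{W^{1,p}(\Omega)}$, i.e.\ $\norm{g_\beta}_{W^{1-1/p,p}(\de\Omega)}$, is controlled, so that the remainders really are $o(\beta^{-\gamma})$;
(b) $\limsup_{\beta\to+\infty}\int_{\de\Omega}g_\beta^p\le\int_{\de\Omega}\abs{\nabla u_\infty}^p$.

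\emph{Why (a) and (b) are the real gap.} What you get for free points the wrong way. By Proposition \ref{boundary:prop}, $g_\beta^{p-1}=\beta u_\beta^{p-1}$ is bounded in $L^\infty(\de\Omega)$. The weak formulation together with \eqref{eq:strong_ubeta_dirichlet} then gives $\beta u_\beta^{p-1}\rightharpoonup\abs{\nabla u_\infty}^{p-1}$ weakly-$*$. This yields only $\liminf\int_{\de\Omega}g_\beta^p\ge\int_{\de\Omega}\abs{\nabla u_\infty}^p$, the opposite of (b).

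Anything better needs boundary regularity of the Robin eigenfunctions that is uniform as $\beta\to+\infty$. For (b) you need uniform $C^{1,\alpha}(\overline\Omega)$ bounds. For (a) you need more, since a uniform H\"older bound on $g_\beta$ does not control its $W^{1-1/p,p}$ norm unless the exponent exceeds $1-1/p$. Such bounds are not standard: Lieberman's conormal estimates carry a H\"older constant of $z\mapsto\beta z^{p-1}$, which blows up as $\beta\to+\infty$. Your fallback does not help either. Strong $W^{1,p}$ convergence says nothing about boundary traces of $\nabla u_\beta$, and the barrier of Proposition \ref{boundary:prop} bounds only $u_\beta$, not its tangential derivative.

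\emph{The missing idea.} The paper never looks at $\nabla u_\beta$ on $\de\Omega$.
\begin{itemize}
\item $U_\beta=\lambda_p(\beta,\Omega)^{-\gamma}u_\beta$ minimizes the Robin energy with datum $f_\beta=u_\beta^{p-1}$, with minimum value $-\frac{1}{p'}\lambda_p(\beta,\Omega)^{-\gamma}$.
\item It is compared with the \emph{Dirichlet} solution $V_\beta$ of $-\Delta_pV_\beta=f_\beta$.
\item Three ingredients are combined: convexity of $\xi\mapsto\abs{\xi}^p/p$ at $\nabla V_\beta$, tested with $U_\beta$; the pointwise bound $\frac{\beta}{p}t^p-a^{p-1}t\ge-\frac{1}{p'}\beta^{-\gamma}a^p$; and the lower bound $-\frac{1}{p'}\lambda_p^D(\Omega)^{-\gamma}$ for the Dirichlet energy, which follows from $\norm{f_\beta}_{L^{p'}(\Omega)}=1$.
\item Together they give $\lambda_p(\beta,\Omega)^{-\gamma}\le\lambda_p^D(\Omega)^{-\gamma}+\beta^{-\gamma}\int_{\de\Omega}\abs{\nabla V_\beta}^p\,d\mathcal H^{N-1}$.
\end{itemize}
The only boundary gradient left is that of a Dirichlet solution, which has no tangential part. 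Its right-hand side is bounded in $L^\infty$ uniformly in $\beta$. So Lieberman's Dirichlet estimate gives $V_\beta\to\lambda_p^D(\Omega)^{-\gamma}u_\infty$ in $C^1(\overline\Omega)$, and expanding $t\mapsto t^{-1/\gamma}$ concludes.
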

 \begin{proof}
    Let us set 
   \[
    \lambda_\beta:=\lambda_p(\beta,\Omega), \qquad f_\beta:=u_\beta^{p-1}, \qquad p':=\frac{p}{p-1}. 
    \]
    The  normalization of $u_\beta$  gives 
    \[ 
    	\|f_\beta\|_{L^{p'}(\Omega)}=1.
    \]
  The function $U_\beta := \lambda_\beta^{-\gamma} u_\beta$ solves
   \begin{equation*}
   \begin{cases}
   -\Delta_p U_\beta = f_\beta & \text{in } \Omega,\\
   |\nabla U_\beta|^{p-2}\frac{\de U_\beta}{\de\nu} + \beta U_\beta^{p-1} = 0 & \text{on } \de\Omega.
   \end{cases}
   \end{equation*}
   As a consequence, $U_\beta$ is the minimizer for the energy functional
   \[
   E_\beta(f_\beta) := \inf_{v\in W^{1,p}(\Omega)} \left\{ \frac{1}{p}\int_\Omega |\nabla v|^p\,dx + \frac{\beta}{p}\int_{\de\Omega} |v|^p\,d\mathcal{H}^{N-1} - \int_\Omega f_\beta v\,dx \right\},
   \]
   and  testing the equation with $U_\beta$ gives
   \begin{equation}
   \label{eq:E_beta_eval}
   E_\beta(f_\beta) = -\frac{1}{p'} \lambda_\beta^{-\gamma}.
   \end{equation}
   On the other hand, let us define the analogous Dirichlet energy
   \begin{equation}
   \label{eq:inftybeta}
   E_\infty(f_\beta) := \inf_{\varphi\in W^{1,p}_0(\Omega)} \left\{ \frac{1}{p}\int_\Omega |\nabla \varphi|^p\,dx - \int_\Omega f_\beta \varphi\,dx \right\}.
   \end{equation}
   For any $\varphi \in W^{1,p}_0(\Omega)$, by definition of $\lambda_{p}^{D}(\Omega)$ it holds that
    \[
   \int_\Omega f_\beta \varphi\,dx \le \|f_\beta\|_{L^{p'}(\Omega)} \|\varphi\|_{L^p(\Omega)} \le \lambda_p^D(\Omega)^{-1/p} \|\nabla \varphi\|_{L^p(\Omega)}.
   \]
   Minimizing $\frac{1}{p} X^{p}-\lambda_{p}^{D}(\Omega)^{-1/p}X$ with respect to $X = \|\nabla \varphi\|_{L^p(\Omega)}$, we obtain
   \begin{equation}
   \label{eq:E_infty_eval}
   E_\infty(f_\beta) \ge -\frac{1}{p'} \lambda_p^D(\Omega)^{-\gamma}.
   \end{equation}

   Let  $V_\beta \in W^{1,p}_0(\Omega)$ be the unique minimizer of \eqref{eq:inftybeta}, which solves the auxiliary Dirichlet problem
   \begin{equation*}
   \begin{cases}
   -\Delta_p V_\beta = f_\beta & \text{in } \Omega,\\
   V_\beta = 0 & \text{on } \de\Omega,
   \end{cases}
   \end{equation*}
 By Proposition \ref{boundary:prop}, $ \|f_\beta\|_{L^\infty(\Omega)} \le C$ as $\beta$ is large. This implies
\[
\sup_{\beta\ge 1}
\|V_\beta\|_{L^\infty(\Omega)}
<+\infty.
\]
 The global boundary regularity estimates for the Dirichlet problem 
(\cite[Theorem 1]{Lieberman}) therefore yield the existence of $\alpha\in(0,1)$ and  $C>0$, both independent of $\beta\geq1$, such that
\begin{equation}
\label{eq:uniform_regular_Vbeta}
\|V_\beta\|_{C^{1,\alpha}(\overline\Omega)}
\leq C.
\end{equation}

 By the convexity of  $\xi\mapsto|\xi|^p/p$, for every
$v\in W^{1,p}(\Omega)$ we have
\begin{align*}
\frac{1}{p}\int_\Omega|\nabla v |^p\, dx-\int _\Omega f_\beta  v\,dx
 & \geq
\frac{1}{p}\int_\Omega |\nabla V_\beta|^p\,dx
-\int _\Omega f_\beta  V_\beta \,dx
 \\
& \quad
+\int_\Omega
 |\nabla V_\beta|^{p-2}\nabla V_\beta
\cdot\nabla(v-V_\beta)\,dx
- \int_\Omega  f_\beta (v-V_\beta)\,dx  \\
&= E_\infty(f_\beta)
+
\int  _{\de\Omega}
|\nabla V _\beta  |^{p-2}
\frac{\de V_\beta}{\de\nu} \,
 v\,d\mathcal H^{N-1},
\end{align*}
where the last equality follows from integration by parts.

 Moreover, the strong maximum principle and Hopf Lemma imply that $V_\beta>0$ in $\Omega$ and  $\frac{\de V_\beta}{\de\nu}<0$ on $\de\Omega$. Taking $v = U_\beta$ yields
   \[
   E_\beta(f_\beta) \ge E_\infty(f_\beta) + \int_{\de\Omega} \left( \frac{\beta}{p}U_\beta^p - |\nabla V_\beta|^{p-1}U_\beta \right)d\mathcal{H}^{N-1}.
   \]
 For every $a,t\geq0$, we have
\[
\frac{\beta}{p}t^p-a^{p-1}t
\geq
-\frac{1}{p'}\beta^{-\gamma}a^p,
\]
with  equality for $t=\beta^{-\gamma}a$. For $a=|\nabla V_\beta|$ and $t=U_\beta$, we obtain   \begin{equation*}
   E_\beta(f_\beta) \ge E_\infty(f_\beta) - \frac{1}{p'\beta^\gamma} \int_{\de\Omega} |\nabla V_\beta|^p\,d\mathcal{H}^{N-1}.
   \end{equation*}
    Combining this with \eqref{eq:E_beta_eval} and \eqref{eq:E_infty_eval} yields
   \begin{equation}
   \label{eq:eigenvalue_comparison_dual}
   \lambda_\beta^{-\gamma} \le \lambda_p^D(\Omega)^{-\gamma} + \beta^{-\gamma} \int_{\de\Omega} |\nabla V_\beta|^p\,d\mathcal{H}^{N-1}.
   \end{equation}

 It remains to identify the limit of $V_\beta$ as
$\beta\to+\infty$.  By strong convergence \eqref{eq:strong_ubeta_dirichlet}, it holds
$f_\beta=u_\beta^{p-1}
\longrightarrow
u_\infty^{p-1}
\text{ strongly in }L^{p'}(\Omega).$
The unique solution of the limiting Dirichlet problem is
\[
V_\infty
=
\lambda_p^D(\Omega)^{-\gamma}u_\infty.
\]

 By the uniform estimate \eqref{eq:uniform_regular_Vbeta}, every
sequence $\beta_n\to+\infty$ admits a subsequence and a function $V\in C^1(\overline\Omega)$ such that
\[
V_{\beta_n}\longrightarrow V
\quad\text{in }C^1(\overline\Omega).
\]
 Passing to the limit in the weak formulation of the equation for
$V_{\beta_n}$, we find
\[
\begin{cases}
-\Delta_pV=u_\infty^{p-1}&\text{in }\Omega,\\
V=0&\text{on }\de\Omega.
\end{cases}
\]
 By uniqueness,
\[
V=V_\infty
=
\lambda_p^D(\Omega)^{-\gamma}u_\infty.
\]
 Every convergent subsequence has the same limit; hence the whole
family satisfies
\begin{equation}
\label{eq:C1_convergence_Vbeta}
V_\beta
\longrightarrow
\lambda_p^D(\Omega)^{-\gamma}u_\infty
\quad\text{in }C^1(\overline\Omega).
\end{equation}
     Consequently,
   \begin{equation}
   \label{eq:V_beta_trace_limit}
   \int_{\de\Omega} |\nabla V_\beta|^p\,d\mathcal{H}^{N-1} \longrightarrow \lambda_p^D(\Omega)^{-p\gamma} \int_{\de\Omega} |\nabla u_\infty|^p\,d\mathcal{H}^{N-1}.
   \end{equation} 
    Inserting \eqref{eq:V_beta_trace_limit} into \eqref{eq:eigenvalue_comparison_dual}, we obtain
   \[
    \lambda_\beta ^{-\gamma} \le  \lambda_p^D(\Omega)^{-\gamma}  +  \beta^{-\gamma} \lambda_p^D(\Omega)^{-p\gamma} \int  _{\de\Omega} |\nabla  u _\infty|^p\, d\mathcal{H}^{N-1} + o(\beta^{-\gamma}).
   \]
    Since $p\gamma = \gamma + 1$, a Taylor expansion of the map $t \mapsto t^{-1/\gamma}$ around $t = \lambda_p^D(\Omega)^{-\gamma}$ yields
   \begin{align*}
   \lambda_\beta &\ge \left( \lambda_p^D(\Omega)^{-\gamma} + \beta^{-\gamma} \lambda_p^D(\Omega)^{-\gamma-1} \int_{\de\Omega} |\nabla u_\infty|^p\,d\mathcal{H}^{N-1} + o(\beta^{-\gamma}) \right)^{-1/\gamma} \\
   &= \lambda_p^D(\Omega) - \frac{1}{\gamma} \beta^{-\gamma} \int_{\de\Omega} |\nabla u_\infty|^p\,d\mathcal{H}^{N-1} + o(\beta^{-\gamma}),
   \end{align*}
    which is the desired lower bound, since $\frac{1}{\gamma} = p-1$.
\end{proof}

\begin{proof}[Proof of Theorem \ref{main_theorem}]
The expansion \eqref{robin:dirichlet:bound} follows by combining Propositions \ref{upper_bound} and \ref{lower_bound}.
\end{proof}

\section{The case \texorpdfstring{$\beta\to 0$}{beta to 0}}
\label{limit_zero}
We now turn to the expansion of the Robin eigenvalue as $\beta$ goes to $0$.

\begin{proof}[Proof of  Theorem \ref{asymptotic_zero}]
Let $u_\beta$ be the normalized positive eigenfunction associated to $\lambda_p(\beta, \Omega)$. Since $\lambda_p(\beta, \Omega) \to 0$ as $\beta \to 0$, testing the eigenvalue equation with $u_\beta$ gives $\int_\Omega |\nabla u_\beta|^p \, dx \to 0$ (for $\beta<0$ the boundary term is absorbed by the trace inequality of Lemma \ref{traceineq}). Hence $u_\beta$ converges strongly in $W^{1,p}(\Omega)$ to a constant $c$, and the normalization implies $c = |\Omega|^{-1/p}$; in particular $u_\beta \to |\Omega|^{-1/p}$ strongly in $L^p(\Omega)$ and in   $L^p(\de\Omega)$. In what follows the limit $\beta\to0$ is always taken along a fixed sign of $\beta$, so that $\mathrm{sign}(\beta)$ is constant; the cases $\beta\to0^+$ and $\beta\to0^-$ are treated simultaneously.

Let us consider
\[ m_\beta := \frac{1}{P(\Omega)} \int  _{\de\Omega}  u_\beta \, d\mathcal H^{N-1} \]
the boundary mean of $u_\beta$, and set the test function $\psi_\beta = u_\beta - m_\beta$. By construction,   $\int_{\de\Omega} \psi_\beta \, d\mathcal H^{N-1} = 0$ , and since $u_\beta \to |\Omega|^{-1/p}$ in   $L^1(\de\Omega)$ , we have $m_\beta \to |\Omega|^{-1/p}$ as $\beta \to 0$.

We claim that $\|\psi_\beta\|_{W^{1,p}(\Omega)} = O(|\beta|^\gamma)$ as $\beta\to 0$, with $\gamma = \frac{1}{p-1}$. 
Testing the weak formulation of the eigenvalue problem with $\psi_\beta$, we get
\begin{equation}
\label{eq:pde_test_psi}
\int_\Omega |\nabla \psi_\beta|^p \, dx + \beta \int  _{\de\Omega}  u_\beta^{p-1} \psi_\beta \, d\mathcal H^{N-1} = \lambda_p(\beta, \Omega) \int_\Omega u_\beta^{p-1} \psi_\beta \, dx.
\end{equation}
The integrals $\int_\Omega u_\beta^{p-1}\psi_\beta\,dx$ and   $\int_{\de\Omega}u_\beta^{p-1}\psi_\beta\,d\mathcal H^{N-1}$  are controlled by $\|\nabla\psi_\beta\|_{L^p(\Omega)}$. Indeed, since $\|u_\beta\|_{L^p(\Omega)}=1$ and $\|u_\beta\|_{L^p(\de\Omega)}$ is bounded, H\"older's inequality, the trace theorem and the Poincar\'e inequality (recall   $\int_{\de\Omega}\psi_\beta\,d\mathcal H^{N-1}=0$ ) give
\[
\left|\int_\Omega u_\beta^{p-1}\psi_\beta\,dx\right|\le \|u_\beta\|_{L^p(\Omega)}^{p-1}\|\psi_\beta\|_{L^p(\Omega)}\le C\|\nabla\psi_\beta\|_{L^p(\Omega)},
\]
and
\[
\left|\int  _{\de\Omega}  u_\beta^{p-1}\psi_\beta\,d\mathcal H^{N-1}\right|\le \|u_\beta\|_{L^p(\de\Omega)}^{p-1}\|\psi_\beta\|  _{L^p(\de\Omega)} \le C\|\nabla\psi_\beta\|_{L^p(\Omega)}.
\]
Hence, from \eqref{eq:pde_test_psi} and the bound $\abs{\lambda_p(\beta,\Omega)}\le C\abs{\beta}$ for $\beta$ small, which follows from Proposition \ref{prop:properties} (the map $\lambda_p(\cdot,\Omega)$ is of class $C^1(\R)$ and vanishes at $\beta=0$), we obtain (or, equivalently, using the trace inequality  and the definition of $\lambda_p(\beta,\Omega)$)
\[
\|\nabla\psi_\beta\|_{L^p(\Omega)}^p=\left|\lambda_p(\beta,\Omega)\int_\Omega u_\beta^{p-1}\psi_\beta\,dx-\beta\int  _{\de\Omega} u_\beta^{p-1}\psi_\beta\,d\mathcal H^{N-1}\right|\le C|\beta|\,\|\nabla\psi_\beta\|_{L^p(\Omega)},
\]
which yields $\|\nabla\psi_\beta\|_{L^p(\Omega)}\le C|\beta|^{\frac{1}{p-1}}=C|\beta|^\gamma$ and, again by the Poincar\'e inequality, $\|\psi_\beta\|_{W^{1,p}(\Omega)}\le C|\beta|^\gamma$.

Rescaling, the function $w_\beta :=|\beta|^{-\gamma}\psi_\beta$ has zero boundary mean and is uniformly bounded in $W^{1,p}(\Omega)$; in particular $\{\int_\Omega w_\beta\,dx\}_\beta$ is bounded.

We now extract the second-order asymptotic expansion of the Rayleigh quotient as $\beta\to 0$. Being $u_\beta = m_\beta + |\beta|^\gamma w_\beta$, the normalization condition $\int_\Omega u_\beta^p \, dx = 1$ reads 
\[ 1 = \int_\Omega (m_\beta + |\beta|^\gamma w_\beta)^p \, dx = m_\beta^p |\Omega| + p m_\beta^{p-1} |\beta|^\gamma \int_\Omega w_\beta \, dx + o(|\beta|^\gamma). \]
Hence 
\begin{equation}
\label{eq:m_beta_expansion}
m_\beta^p = \frac{1}{|\Omega|} - |\beta|^\gamma \frac{p}{|\Omega|} m_\beta^{p-1} \int_\Omega w_\beta \, dx + o(|\beta|^\gamma).
\end{equation}
Substituting into the Rayleigh quotient, we get
\begin{align*}
\lambda_p(\beta, \Omega) &= \int_\Omega |\nabla u_\beta|^p \, dx + \beta \int  _{\de\Omega}  u_\beta^p \, d\mathcal H^{N-1} \\
&= |\beta|^{p\gamma} \int_\Omega |\nabla w_\beta|^p \, dx + \beta \int  _{\de\Omega}  (m_\beta + |\beta|^\gamma w_\beta)^p \, d\mathcal H^{N-1}.
\end{align*}
Expanding the boundary integral and recalling that   $\int_{\de\Omega} w_\beta \, d\mathcal H^{N-1} = 0$ , the linear term in $w_\beta$ vanishes and, by the same Taylor estimate as above (now on   $\de\Omega$ , where $\{w_\beta\}$ is bounded in   $L^p(\de\Omega)$),
 \[ \int  _{\de\Omega}  (m_\beta + |\beta|^\gamma w_\beta)^p \, d\mathcal H^{N-1} = P(\Omega) m_\beta^p + o(|\beta|^\gamma). \]
Substituting \eqref{eq:m_beta_expansion} into the above expression, we obtain:
\begin{multline}
\label{eq:lambda_expansion_vbeta}
\lambda_p(\beta, \Omega) = \beta \frac{P(\Omega)}{|\Omega|} + |\beta|^{\frac{p}{p-1}} \left( \int_\Omega |\nabla w_\beta|^p \, dx - \text{sign}(\beta) p \frac{P(\Omega)}{|\Omega|} m_\beta^{p-1} \int_\Omega w_\beta \, dx \right)  \\ + o\big(|\beta|^{\frac{p}{p-1}}\big).
\end{multline}
where we used the identity $\beta |\beta|^\gamma = \text{sign}(\beta) |\beta|^{p\gamma}$, as well as that $p\gamma = \frac{p}{p-1}$.

We aim to determine the limit of the expression given by the parentheses in \eqref{eq:lambda_expansion_vbeta}. From now on, we work in the subspace
\[
X:=\left\{\varphi\in W^{1,p}(\Omega):
\int  _{\de\Omega} \varphi\,d\mathcal H^{N-1}=0
\right\}.
\]
It holds that $w_{\beta}\in X$. Recall that \eqref{equazionelimite} admits a unique solution in $X$; equivalently, $v$ is the unique minimizer, on $X$, of the corresponding strictly convex and coercive functional. Set
\[
V:=\operatorname{sgn}(\beta)|\Omega|^{-\frac{1}{p}}v.
\]
Then $V\in X$ and it satisfies
\begin{multline}
\label{eq:solV}
\int_\Omega|\nabla V|^{p-2}\nabla V\cdot\nabla\varphi\,dx= \\ = L(\varphi):=\text{sign}(\beta)|\Omega|^{-\frac{p-1}{p}}\left(\frac{P(\Omega)}{|\Omega|}\int_\Omega\varphi\,dx-\int  _{\de\Omega} \varphi\,d\mathcal H^{N-1}\right)
\end{multline}
for every $\varphi\in W^{1,p}(\Omega)$. 

 Choosing $\varphi=V$
\begin{equation}\label{eq:energyV}
\int_\Omega|\nabla V|^p\,dx=L(V)=\frac{P(\Omega)}{|\Omega|^2}\int_\Omega v\,dx.
\end{equation}
\emph{Lower bound.} By the convexity of $\xi\mapsto|\xi|^p$ and by choosing $\varphi=w_\beta$ and $\varphi=V$ in \eqref{eq:solV},
\[
\int_\Omega|\nabla w_\beta|^p\,dx\ge\int_\Omega|\nabla V|^p\,dx+p\int_\Omega|\nabla V|^{p-2}\nabla V\cdot(\nabla w_\beta-\nabla V)\,dx=p\,L(w_\beta)-(p-1)L(V).
\]
Since   $\int_{\de\Omega}w_\beta\,d\mathcal H^{N-1}=0$ , we have $L(w_\beta)=\text{sign}(\beta)|\Omega|^{-\frac{p-1}{p}}\frac{P(\Omega)}{|\Omega|}\int_\Omega w_\beta\,dx$, whence
\[
p\,L(w_\beta)-\text{sign}(\beta)\,p\,\frac{P(\Omega)}{|\Omega|}\,m_\beta^{p-1}\int_\Omega w_\beta\,dx=\text{sign}(\beta)\,p\,\frac{P(\Omega)}{|\Omega|}\Big(|\Omega|^{-\frac{p-1}{p}}-m_\beta^{p-1}\Big)\int_\Omega w_\beta\,dx\longrightarrow0,
\]
because $m_\beta\to|\Omega|^{-1/p}$ and $\{\int_\Omega w_\beta\,dx\}$ is bounded. Inserting the lower bound for $\int_\Omega|\nabla w_\beta|^p\,dx$ into the bracket of \eqref{eq:lambda_expansion_vbeta} and recalling \eqref{eq:energyV}, we obtain
\[
\liminf_{\beta\to0}\frac{\lambda_p(\beta,\Omega)-\beta\frac{P(\Omega)}{|\Omega|}}{|\beta|^{\frac{p}{p-1}}}\ge-(p-1)L(V)=-(p-1)\frac{P(\Omega)}{|\Omega|^2}\int_\Omega v\,dx.
\]

\emph{Upper bound.} We test the Rayleigh quotient \eqref{variationalaut} with
\[
\Phi_\beta:=|\Omega|^{-\frac{1}{p}}+|\beta|^\gamma V=|\Omega|^{-\frac{1}{p}}\big(1+\text{sign}(\beta)|\beta|^\gamma v\big)\in W^{1,p}(\Omega).
\]
Using   $\int_{\de\Omega}V\,d\mathcal H^{N-1}=0$ , we get
\[
\int_\Omega|\nabla\Phi_\beta|^p\,dx=|\beta|^{\frac{p}{p-1}}\int_\Omega|\nabla V|^p\,dx,\qquad
\int_\Omega{ \abs{\Phi_\beta}^p}\,dx=1+p\,|\Omega|^{-\frac{p-1}{p}}|\beta|^\gamma\int_\Omega V\,dx+o(|\beta|^\gamma),
\]
\[
\beta\int  _{\de\Omega}{ \abs{\Phi_\beta}^p} \,d\mathcal H^{N-1}=\beta\frac{P(\Omega)}{|\Omega|}+o\big(|\beta|^{\frac{p}{p-1}}\big).
\]
Hence, using \eqref{eq:energyV},
\begin{multline*}
\lambda_p(\beta,\Omega)\le  \frac{\displaystyle\int_\Omega|\nabla\Phi_\beta|^p\,dx+\beta\int_{\de\Omega}\abs{\Phi_\beta}^p\,d\mathcal H^{N-1}}{\displaystyle\int_\Omega\abs{\Phi_\beta}^p\,dx}
 \\
=\beta\frac{P(\Omega)}{|\Omega|}-(p-1)|\beta|^{\frac{p}{p-1}}\frac{P(\Omega)}{|\Omega|^2}\int_\Omega v\,dx\;+o\big(|\beta|^{\frac{p}{p-1}}\big),
\end{multline*}
so that $\displaystyle\limsup_{\beta\to0}\Big(\lambda_p(\beta,\Omega)-\beta\frac{P(\Omega)}{|\Omega|}\Big)|\beta|^{-\frac{p}{p-1}}\le-(p-1)\frac{P(\Omega)}{|\Omega|^2}\int_\Omega v\,dx.$

Combining the two bounds, the limit exists and
\[
\lim_{\beta\to0}\frac{\lambda_p(\beta,\Omega)-\beta\frac{P(\Omega)}{|\Omega|}}{|\beta|^{\frac{p}{p-1}}}=-(p-1)\frac{P(\Omega)}{|\Omega|^2}\int_\Omega v\,dx,
\]
which is \eqref{neumannlim}. This completes the proof.
\end{proof}

Let us conclude with the following remarks.

\begin{rem}\label{rem:variational_c0}
It is useful to isolate the coefficient appearing in \eqref{neumannlim}. We set
\[
c_0(\Omega):=(p-1)\frac{P(\Omega)}{|\Omega|^2}\int_\Omega v\,dx,
\]
so that Theorem \ref{asymptotic_zero} can be written as
\[
\lambda_p(\beta,\Omega)
=
\beta\frac{P(\Omega)}{|\Omega|}
-
c_0(\Omega)|\beta|^{\frac{p}{p-1}}
+
o\left(|\beta|^{\frac{p}{p-1}}\right),
\qquad \beta\to0.
\]
The coefficient \(c_0(\Omega)\) has a natural variational characterization.
Let
\[
X:=
\left\{
\varphi\in W^{1,p}(\Omega):
\int  _{\de\Omega} \varphi\,d\mathcal H^{N-1}=0
\right\}.
\]
For \(\varphi\in W^{1,p}(\Omega)\), define
\[
\mathcal F_0(\varphi)
:=
  \frac{
\displaystyle\int_\Omega|\nabla\varphi|^p\,dx
+
p\displaystyle\int_{\de\Omega}\varphi\,d\mathcal H^{N-1}
-
p\frac{P(\Omega)}{|\Omega|}\displaystyle\int_\Omega\varphi\,dx
}{|\Omega|}.
\]
Then
\begin{equation}
\label{eq:variational_c0}
-c_0(\Omega)
=
\min_{\varphi\in X}\mathcal F_0(\varphi).
\end{equation}
On \(X\), by the Poincaré inequality with zero
boundary mean, \(\mathcal F_0\) is coercive and strictly convex. Hence it has a
unique minimizer, which satisfies \eqref{equazionelimite}, the Euler--Lagrange equation of \(\mathcal F_0\) on \(X\). Moreover, testing the equation with
\(v\) itself and using the normalization   \(\int_{\de\Omega}v=0\) , we get
\[
\int_\Omega|\nabla v|^p\,dx
=
\frac{P(\Omega)}{|\Omega|}\int_\Omega v\,dx.
\]
Therefore
\[
\mathcal F_0(v)
=
\frac{
|\Omega|\displaystyle\int_\Omega|\nabla v|^p\,dx
-
pP(\Omega)\displaystyle\int_\Omega v\,dx
}{|\Omega|^2}
=
-(p-1)\frac{P(\Omega)}{|\Omega|^2}\int_\Omega v\,dx
=
-c_0(\Omega),
\]
which proves \eqref{eq:variational_c0}.

\end{rem}

\begin{rem}
\label{rem:comparison_BDP}
The previous remark also clarifies the relation between Theorem
\ref{asymptotic_zero} and the estimates obtained in \cite{BDP_pota}. Those
estimates are stated for \(\beta>0\). Under the assumptions required there,
one has
\begin{equation}
\label{eq:BDP_nu_bound}
\frac{1}{\lambda_p(\beta,\Omega)^{\frac{1}{p-1}}}
\le
\frac{1}{\nu_p}
+
\left(\frac{|\Omega|}{\beta P(\Omega)}\right)^{\frac{1}{p-1}},
\end{equation}
where
\[
\nu_p
=
\inf_{\substack{f\in L^\infty(\Omega)\\ f\ge0,\ f\not\equiv0}}
\frac{\displaystyle\int_\Omega f^{p'}\,dx}{\displaystyle\int_\Omega|\nabla v_f|^p\,dx},
\qquad
p'=\frac{p}{p-1},
\]
and \(v_f\) is the normalized solution of
\[
  \begin{cases}
-\Delta_p v_f=f & \text{in }\Omega,\\[1ex]
|\nabla v_f|^{p-2}\dfrac{\de v_f}{\de\nu}
=
-\dfrac1{P(\Omega)}\displaystyle\int_\Omega f\,dx
& \text{on }\de\Omega,\\[2ex]
\displaystyle\int_{\de\Omega}v_f\,d\mathcal H^{N-1}=0.
\end{cases} 
\]
Moreover, if \(u_\Omega\) denotes the \(p\)-torsion function,
\[
  \begin{cases}
-\Delta_p u_\Omega=1 & \text{in }\Omega,\\
u_\Omega=0 & \text{on }\de\Omega
\end{cases} 
\quad\text{ and }\quad T_p(\Omega)
:=
\int_\Omega u_\Omega\,dx
=
\int_\Omega|\nabla u_\Omega|^p\,dx,
\]
then
\begin{equation}
\label{eq:BDP_torsion_bound}
\frac{1}{\lambda_p(\beta,\Omega)^{\frac{1}{p-1}}}
\ge
\frac{T_p(\Omega)}{|\Omega|}
+
\left(\frac{|\Omega|}{\beta P(\Omega)}\right)^{\frac{1}{p-1}}.
\end{equation}

Expanding \eqref{eq:BDP_nu_bound} and \eqref{eq:BDP_torsion_bound} as
\(\beta\to0^+\), one obtains
\[
-\frac{p-1}{\nu_p}
\left(\frac{P(\Omega)}{|\Omega|}\right)^{p'}
\le
\liminf_{\beta\to0^+}
\frac{
\lambda_p(\beta,\Omega)-\beta\frac{P(\Omega)}{|\Omega|}
}{\beta^{p'}}
\]
and
\[
\limsup_{\beta\to0^+}
\frac{
\lambda_p(\beta,\Omega)-\beta\frac{P(\Omega)}{|\Omega|}
}{\beta^{p'}}
\le
-(p-1)\frac{T_p(\Omega)}{|\Omega|}
\left(\frac{P(\Omega)}{|\Omega|}\right)^{p'}.
\]
Consequently,
\begin{equation}
\label{eq:c0_bracket}
(p-1)\frac{T_p(\Omega)}{|\Omega|}
\left(\frac{P(\Omega)}{|\Omega|}\right)^{p'}
\le
c_0(\Omega)
\le
\frac{p-1}{\nu_p}
\left(\frac{P(\Omega)}{|\Omega|}\right)^{p'}.
\end{equation}

The two inequalities in \eqref{eq:c0_bracket} can also be recovered directly
from the variational characterization \eqref{eq:variational_c0}. Indeed, the
torsion bound follows by testing \(\mathcal F_0\) with
\[
z=
\left(\frac{P(\Omega)}{|\Omega|}\right)^{\frac{1}{p-1}}u_\Omega.
\]
Since \(z\in X\), \(u_\Omega=0\) on   \(\de\Omega\) , and
\[
\int_\Omega|\nabla u_\Omega|^p\,dx=\int_\Omega u_\Omega\,dx=T_p(\Omega),
\]
we get
\[
\mathcal F_0(v)
\le
\mathcal F_0(z)
=
-(p-1)\frac{T_p(\Omega)}{|\Omega|}
\left(\frac{P(\Omega)}{|\Omega|}\right)^{p'}.
\]
Since \(\mathcal F_0(v)=-c_0(\Omega)\), this gives the left-hand inequality
in \eqref{eq:c0_bracket}.

On the other hand, choosing the constant datum
\[
f=\frac{P(\Omega)}{|\Omega|}
\]
in the definition of \(\nu_p\), the corresponding function \(v_f\) is 
the solution \(v\) of \eqref{equazionelimite}. Hence
\[
\frac{1}{\nu_p}
\ge
\frac{\displaystyle\int_\Omega|\nabla v|^p\,dx}{\displaystyle\int_\Omega f^{p'}\,dx}.
\]
Using
\[
\int_\Omega|\nabla v|^p\,dx
=
\frac{P(\Omega)}{|\Omega|}\int_\Omega v\,dx
\]
and
\[
\int_\Omega f^{p'}\,dx
=
|\Omega|
\left(\frac{P(\Omega)}{|\Omega|}\right)^{p'},
\]
we obtain
\[
\frac{1}{\nu_p}
\ge
\left(\frac{P(\Omega)}{|\Omega|}\right)^{-\frac{1}{p-1}}
\frac{\displaystyle\int_\Omega v\,dx}{|\Omega|}.
\]
Equivalently,
\[
c_0(\Omega)
=
(p-1)\frac{P(\Omega)}{|\Omega|^2}\int_\Omega v\,dx
\le
\frac{p-1}{\nu_p}
\left(\frac{P(\Omega)}{|\Omega|}\right)^{p'},
\]
which is the right-hand inequality in \eqref{eq:c0_bracket}.
\end{rem}
\appendix
\section{Appendix}\label{appendix:properties}
\subsection{Properties of the first Robin eigenvalue}
\begin{proof}[Proof of Lemma \ref{traceineq}]
We argue by contradiction. If \eqref{trace:interp} fails for some $\eps_0>0$, there exist $w_m\in W^{1,p}(\Omega)$ such that
\[
\eps_0\int_\Omega\abs{\nabla w_m}^p\,dx+m\int_\Omega\abs{w_m}^p\,dx<\int_{\de\Omega}\abs{w_m}^p\,d\mathcal H^{N-1}=1.
\]
Then $\set{w_m}$ is bounded in $W^{1,p}(\Omega)$ and, up to a subsequence, $w_m\rightharpoonup w$ in $W^{1,p}(\Omega)$, $w_m\to w$ in $L^p(\Omega)$ and, by the compactness of the trace operator, also in $L^p(\de\Omega)$. From $\int_\Omega\abs{w_m}^p\,dx<1/m$ we get $w=0$ a.e. in $\Omega$, whence its trace vanishes; this contradicts $\int_{\de\Omega}\abs{w}^p\,d\mathcal H^{N-1}=\lim_m\int_{\de\Omega}\abs{w_m}^p\,d\mathcal H^{N-1}=1$.
\end{proof}

\begin{proof}[Proof of Proposition \ref{prop:properties}]
\emph{Existence and sign.} Testing \eqref{variationalaut} with the first Dirichlet eigenfunction  gives $\lambda_p(\beta,\Omega)\le\lambda_p^D(\Omega)$.

If $\beta\ge0$ the Rayleigh quotient is nonnegative, hence $\lambda_p(\beta,\Omega)\ge0$. If $\beta<0$, by Lemma \ref{traceineq} with $\eps=\frac{1}{2\abs{\beta}}$, for every $w\in W^{1,p}(\Omega)$,
\begin{multline*}
\int_\Omega\abs{\nabla w}^p\,dx+\beta\int_{\de\Omega}\abs{w}^p\,d\mathcal H^{N-1}\ge \\ \ge \tfrac12\int_\Omega\abs{\nabla w}^p\,dx-\abs{\beta}C(\eps)\int_\Omega\abs{w}^p\,dx\ge -\abs{\beta}C(\eps)\int_\Omega\abs{w}^p\,dx,
\end{multline*}
so that $\lambda_p(\beta,\Omega)\ge -\abs{\beta}C(\eps)>-\infty$.

Let $\set{w_n}\subseteq W^{1,p}(\Omega)$ be a minimizing sequence with $\norm{w_n}_{L^p(\Omega)}=1$. The above estimates show that $\int_\Omega\abs{\nabla w_n}^p\,dx$ is bounded, hence $\set{w_n}$ is bounded in $W^{1,p}(\Omega)$. Up to a subsequence, $w_n\rightharpoonup u_\beta$ in $W^{1,p}(\Omega)$, $w_n\to u_\beta$ strongly in $L^p(\Omega)$ and, by compactness of the trace embedding, in $L^p(\de\Omega)$. Therefore $\norm{u_\beta}_{L^p(\Omega)}=1$, $\int_{\de\Omega}\abs{u_\beta}^p=\lim_n\int_{\de\Omega}\abs{w_n}^p$ and, by the weak lower semicontinuity of $w\mapsto\int_\Omega\abs{\nabla w}^p$,
\begin{multline*}
\int_\Omega\abs{\nabla u_\beta}^p\,dx+\beta\int_{\de\Omega}\abs{u_\beta}^p\,d\mathcal H^{N-1}\le \\ \le\liminf_n\left(\int_\Omega\abs{\nabla w_n}^p\,dx+\beta\int_{\de\Omega}\abs{w_n}^p\,d\mathcal H^{N-1}\right)=\lambda_p(\beta,\Omega),
\end{multline*}
so that $u_\beta$ is a minimizer.

Finally, the constant function $w\equiv1$ gives $\lambda_p(\beta,\Omega)\le \beta\frac{P(\Omega)}{\abs{\Omega}}$, which is negative for $\beta<0$, while $\lambda_p(0,\Omega)=0$. For $\beta>0$, if it were $\lambda_p(\beta,\Omega)=0$, a normalized minimizer would satisfy $\int_\Omega\abs{\nabla u_\beta}^p\,dx=0$ and $\int_{\de\Omega}\abs{u_\beta}^p\,d\mathcal H^{N-1}=0$, i.e. $u_\beta$ would be a constant of zero trace, hence $u_\beta\equiv0$, against $\norm{u_\beta}_{L^p(\Omega)}=1$. Thus $\lambda_p(\beta,\Omega)>0$ for $\beta>0$.

\emph{Simplicity.} Let $u$ be a minimizer. Since $\int_\Omega\abs{\nabla\abs{u}}^p\,dx=\int_\Omega\abs{\nabla u}^p\,dx$, $|u|$ is a minimizer as well. By the Harnack inequality {(see \cite{trudinger})} and the connectedness of $\Omega$, $\abs{u}>0$ in $\Omega$; thus every minimizer has constant sign.

Let now $u_1,u_2>0$ be two minimizers normalized by $\norm{u_1}_{L^p(\Omega)}=\norm{u_2}_{L^p(\Omega)}=1$, and for $t\in[0,1]$ set
\[
\sigma_t=\big((1-t)u_1^p+t\,u_2^p\big)^{1/p}\in W^{1,p}(\Omega).
\]
It holds that {(see \cite{lindqvist})}
\begin{equation}\label{hidden}
\int_\Omega\abs{\nabla\sigma_t}^p\,dx\le(1-t)\int_\Omega\abs{\nabla u_1}^p\,dx+t\int_\Omega\abs{\nabla u_2}^p\,dx,
\end{equation}
with equality if and only if $u_1$ and $u_2$ are proportional. Since $\sigma_t^p=(1-t)u_1^p+t\,u_2^p$ pointwise, $\int_\Omega\sigma_t^p\,dx=1$ and the boundary term satisfies
\[
\int_{\de\Omega}\abs{\sigma_t}^p\,d\mathcal H^{N-1}=(1-t)\int_{\de\Omega}\abs{u_1}^p\,d\mathcal H^{N-1}+t\int_{\de\Omega}\abs{u_2}^p\,d\mathcal H^{N-1}.
\]
Therefore, for any $\beta\in\R$,
\[
\lambda_p(\beta,\Omega)\le \int_\Omega\abs{\nabla\sigma_t}^p\,dx+\beta\int_{\de\Omega}\abs{\sigma_t}^p\,d\mathcal H^{N-1}\le(1-t)\lambda_p(\beta,\Omega)+t\,\lambda_p(\beta,\Omega)=\lambda_p(\beta,\Omega).
\]
All inequalities are thus equalities; in particular equality holds in \eqref{hidden}, which forces $u_1$ and $u_2$ to be proportional. By the normalization, $u_1=u_2$.

\emph{Concavity and limits.} For fixed $w$, the map $\beta\mapsto\frac{\int_\Omega\abs{\nabla w}^p\,dx+\beta\int_{\de\Omega}\abs{w}^p\,d\mathcal H^{N-1}}{\int_\Omega\abs{w}^p\,dx}$ is affine with nonnegative slope; hence $\lambda_p(\cdot,\Omega)$, being the pointwise infimum of such a family, is concave, upper semicontinuous and non-decreasing. Being concave and finite on $\R$, it is continuous on $\R$; together with $\lambda_p(0,\Omega)=0$, this gives $\lim_{\beta\to0}\lambda_p(\beta,\Omega)=0$.

\emph{Limits as $\beta\to\pm\infty$ and strong convergence in the
Dirichlet limit.}
The constant test function gives
 \[
\lambda_p(\beta,\Omega)
\leq
\beta\frac{P(\Omega)}{|\Omega|}
\longrightarrow-\infty
\qquad\text{as }\beta\to-\infty.
\]

Let now $\beta\to+\infty$. Since the map
$\beta\mapsto\lambda_p(\beta,\Omega)$ is non-decreasing and bounded
above by $\lambda_p^D(\Omega)$,  there exists
$L\leq\lambda_p^D(\Omega)$ such that
\[
\lambda_p(\beta,\Omega)\longrightarrow L.
\]
Testing the eigenvalue equation with $u_\beta$ and recalling that
$\|u_\beta\|_{L^p(\Omega)}=1$, we obtain
\begin{equation}
\label{eq:energy_identity_dirichlet_limit}
\int_\Omega|\nabla u_\beta|^p\,dx
+
\beta\int_{\de\Omega}u_\beta^p\,d\mathcal H^{N-1}
=
\lambda_p(\beta,\Omega)
\leq
\lambda_p^D(\Omega).
\end{equation}
Consequently,
$\int_{\de\Omega}u_\beta^p\,d\mathcal H^{N-1}
\leq
\frac{\lambda_p^D(\Omega)}{\beta}
\longrightarrow0$ and the family $\{u_\beta\}$ is bounded in $W^{1,p}(\Omega)$.

Let $\beta_n\to+\infty$. Up to a subsequence, not relabeled, there
exists $u_*\in W^{1,p}(\Omega)$ such that
\[
u_{\beta_n}\rightharpoonup u_*
\text{ weakly in }W^{1,p}(\Omega),
\qquad
u_{\beta_n}\longrightarrow u_*
\text{ strongly in }L^p(\Omega) \text{ and in }L^p(\de\Omega).
\]
 It follows that $\|u_*\|_{L^p(\Omega)}=1$
and $u_*=0$ on $\de\Omega$, so that $u_*\in W^{1,p}_0(\Omega)$. Hence, by the variational
characterization of the first Dirichlet eigenvalue and the weak lower
semicontinuity of the Dirichlet integral,
\begin{align*}
\lambda_p^D(\Omega)
\leq
\int_\Omega|\nabla u_*|^p\,dx
\leq
\liminf_{n\to\infty}
\int_\Omega|\nabla u_{\beta_n}|^p\,dx
&\leq
\limsup_{n\to\infty}
\int_\Omega|\nabla u_{\beta_n}|^p\,dx
\\
&\leq
\lim_{n\to\infty}\lambda_p(\beta_n,\Omega)
=
L\le\lambda_p^D(\Omega).
\end{align*}
Hence
$L=\lambda_p^D(\Omega)$ , and $u_*$ is a normalized first Dirichlet eigenfunction. Since
$u_{\beta_n}\geq0$ and $u_{\beta_n}\to u_*$ strongly in
$L^p(\Omega)$, we have $u_*\geq0$. By the simplicity of the first
Dirichlet eigenvalue,
$u_*=u_\infty$.
Moreover,
\[
\int_\Omega|\nabla u_{\beta_n}|^p\,dx
\longrightarrow
\int_\Omega|\nabla u_\infty|^p\,dx.
\]
Together with the weak convergence in $W^{1,p}$ yields
\[
u_{\beta_n}\longrightarrow u_\infty
\quad\text{strongly in }W^{1,p}(\Omega).
\]

As every sequence $\beta_n\to+\infty$ admits a subsequence converging
strongly to the same limit $u_\infty$, the whole family satisfies
\begin{equation*}
u_\beta\longrightarrow u_\infty
\quad\text{strongly in }W^{1,p}(\Omega)
\qquad\text{as }\beta\to+\infty.
\end{equation*}

\emph{Continuity of $\beta\mapsto u_\beta$.} We reason similarly as in the previous step. Let $\beta_n\to\beta_0$. The minimizers $u_{\beta_n}$ are bounded in $W^{1,p}(\Omega)$ (uniformly for $\beta_n$ in a bounded interval), so $u_{\beta_n}\rightharpoonup \bar u$ in $W^{1,p}(\Omega)$, strongly in $L^p(\Omega)$ and $L^p(\de\Omega)$. By lower semicontinuity and the continuity of $\lambda_p(\cdot,\Omega)$,
\begin{multline*}
\int_\Omega\abs{\nabla\bar u}^p\,dx+\beta_0\int_{\de\Omega}\bar u^p\,d\mathcal H^{N-1}\le\liminf_n\Big(\int_\Omega\abs{\nabla u_{\beta_n}}^p\,dx+\beta_n\int_{\de\Omega}u_{\beta_n}^p\,d\mathcal H^{N-1}\Big)=\\=\lim_n\lambda_p(\beta_n,\Omega)=\lambda_p(\beta_0,\Omega),
\end{multline*}
so $\bar u$ is a minimizer for $\beta_0$, and by the uniqueness of the minimizer $\bar u=u_{\beta_0}$. Moreover the above chain is an equality, whence $\int_\Omega\abs{\nabla u_{\beta_n}}^p\,dx\to\int_\Omega\abs{\nabla u_{\beta_0}}^p\,dx$; together with the weak convergence in  $L^p$, this gives  $u_{\beta_n}\to u_{\beta_0}$ in $W^{1,p}(\Omega)$. Since the limit does not depend on the subsequence, the whole family converges.

\emph{Differentiability.} Using $u_\beta$ as a test function for $\lambda_p(\beta+h,\Omega)$ and recalling $\norm{u_\beta}_{L^p(\Omega)}=1$,
\[
\lambda_p(\beta+h,\Omega)\le\int_\Omega\abs{\nabla u_\beta}^p\,dx+(\beta+h)\int_{\de\Omega}u_\beta^p\,d\mathcal H^{N-1}=\lambda_p(\beta,\Omega)+h\int_{\de\Omega}u_\beta^p\,d\mathcal H^{N-1},
\]
so $\limsup_{h\to0^+}\frac{\lambda_p(\beta+h,\Omega)-\lambda_p(\beta,\Omega)}{h}\le\int_{\de\Omega}u_\beta^p\,d\mathcal H^{N-1}$. Symmetrically, testing $\lambda_p(\beta,\Omega)$ with $u_{\beta+h}$ gives
\[
\lambda_p(\beta,\Omega)\le\lambda_p(\beta+h,\Omega)-h\int_{\de\Omega}u_{\beta+h}^p\,d\mathcal H^{N-1},
\]
hence $\frac{\lambda_p(\beta+h,\Omega)-\lambda_p(\beta,\Omega)}{h}\ge\int_{\de\Omega}u_{\beta+h}^p\,d\mathcal H^{N-1}\to\int_{\de\Omega}u_\beta^p\,d\mathcal H^{N-1}$ as $h\to0^+$, by the continuity of $\beta\mapsto u_\beta$ and the compactness of the trace. The same holds for $h\to0^-$; therefore $\lambda_p(\cdot,\Omega)$ is differentiable with derivative \eqref{derivata}, which is in turn continuous, so that $\lambda_p(\cdot,\Omega)\in C^1(\R)$.

\emph{Strict monotonicity.} We first show that $\int_{\de\Omega}u_\beta^p\,d\mathcal H^{N-1}>0$ for every $\beta\in\R$. For $\beta=0$ the eigenfunction is constant and the claim is immediate, so assume $\beta\neq0$. Testing the weak formulation of \eqref{eigenvalue_problem} with the constant function $\varphi\equiv1\in W^{1,p}(\Omega)$, the gradient term vanishes and we obtain
\[
\beta\int_{\de\Omega}u_\beta^{p-1}\,d\mathcal H^{N-1}=\lambda_p(\beta,\Omega)\int_\Omega u_\beta^{p-1}\,dx.
\]
Since $u_\beta>0$ in $\Omega$ and $\lambda_p(\beta,\Omega)$ has the same sign as $\beta$, the right-hand side is nonzero; hence $\int_{\de\Omega}u_\beta^{p-1}\,d\mathcal H^{N-1}\neq0$, so the trace of $u_\beta$ does not vanish $\mathcal H^{N-1}$-a.e. on $\de\Omega$ and therefore $\int_{\de\Omega}u_\beta^p\,d\mathcal H^{N-1}>0$. Recalling \eqref{derivata}, we obtain that the map $\beta\mapsto\lambda_p(\beta,\Omega)$ is strictly increasing. {In particular, since $\lambda_p(\beta,\Omega)\to\lambda_p^D(\Omega)$ as $\beta\to+\infty$, the strict monotonicity yields $\lambda_p(\beta,\Omega)<\lambda_p^D(\Omega)$ for every $\beta\in\R$, which proves the upper inequality in \eqref{bound:sign}.} 
\end{proof}

\subsection{An elementary algebraic inequality}\label{appendix:algebraic}
{
We collect here an elementary algebraic inequality that is repeatedly used throughout the paper to control the remainder terms in the expansion of $p$-Laplacian type operators. For the sake of completeness, we provide a short  proof.

\begin{lem}\label{lem:algebraic_taylor}
Let $p > 1$. There exists a constant $C = C(p) > 0$ such that for all $\xi, \eta \in \mathbb{R}^N$, there holds
\begin{equation}\label{eq:alg_p_geq_2}
\big| |\xi+\eta|^p - |\xi|^p - p |\xi|^{p-2}\xi \cdot \eta \big| \le C \big( |\xi|^{p-2}|\eta|^2 + |\eta|^p \big) \qquad \text{if } p \ge 2,
\end{equation}
and
\begin{equation}\label{eq:alg_p_less_2}
\big| |\xi+\eta|^p - |\xi|^p - p |\xi|^{p-2}\xi \cdot \eta \big| \le C |\eta|^p \qquad \text{if } 1 < p < 2.
\end{equation}
\end{lem}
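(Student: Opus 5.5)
The plan is to use homogeneity to reduce to a statement on the unit sphere, and then apply Taylor's formula. Set $F(\xi)=|\xi|^p$ and $R(\xi,\eta)=|\xi+\eta|^p-|\xi|^p-p|\xi|^{p-2}\xi\cdot\eta$. When $\xi=0$ we have $R=|\eta|^p$, so both \eqref{eq:alg_p_geq_2} and \eqref{eq:alg_p_less_2} hold with $C\ge1$. When $\xi\neq0$, scaling gives $R(\xi,\eta)=|\xi|^pR(e,t)$, where $e=\xi/|\xi|$ and $t=\eta/|\xi|$. It therefore suffices to prove $|R(e,t)|\le C(|t|^2+|t|^p)$ for $p\ge2$ and $|R(e,t)|\le C|t|^p$ for $1<p<2$, for every unit vector $e$ and every $t\in\R^N$.

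We split into two regimes according to the size of $|t|$.

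\emph{Small $t$, say $|t|\le\frac12$.} The segment $e+st$, $s\in[0,1]$, stays in the annulus $\{\frac12\le|x|\le\frac32\}$, where $F$ is smooth and $|D^2F|\le C(p)$. Taylor's formula with integral remainder then gives $|R(e,t)|\le C|t|^2$.
\begin{itemize}
\item For $p\ge2$ this is already bounded by $C(|t|^2+|t|^p)$.
\item For $1<p<2$ we use $|t|\le\frac12$ and $p<2$, so that $|t|^2\le|t|^p$.
\end{itemize}

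\emph{Large $t$, $|t|>\frac12$.} We bound the three terms crudely:
\[
|R(e,t)|\le(1+|t|)^p+1+p|t|\le C|t|^p,
\]
using $1\le2|t|$ and $|t|\le2^{p-1}|t|^p$. The last inequality holds because $|t|^{p-1}\ge2^{1-p}$ when $|t|>\frac12$.

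Scaling back, $|\xi|^p|t|^2=|\xi|^{p-2}|\eta|^2$ and $|\xi|^p|t|^p=|\eta|^p$, which yields both inequalities.

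The only step that needs care is the small-$t$ bound when $1<p<2$. There $D^2F$ is singular at the origin, so it matters that the segment from $e$ to $e+t$ stays away from $0$; this is exactly why we restrict to $|t|\le\frac12$. The rest of the argument is routine.
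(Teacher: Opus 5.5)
Your proof is correct and follows the same route as the paper: reduce by homogeneity to $|\xi|=1$, then use Taylor's formula when $|\eta|/|\xi|$ is small and a crude growth bound when it is large. Your explicit split at $|t|=\tfrac12$ is slightly tidier than the paper's argument through the limits at $0$ and $\infty$, since it removes the implicit continuity/compactness step on the intermediate range; you also treat the case $\xi=0$ explicitly, which the paper leaves aside.
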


\begin{proof}
Let us assume $\xi \neq 0$. Factoring out $|\xi|$ and writing $\eta = |\xi| w$ for some $w \in \mathbb{R}^N$, the left-hand side of both inequalities takes the form $|\xi|^p g(v, w)$, where $v = \frac{\xi}{|\xi|} \in S^{N-1}$ and $g(v, w) := \big| |v+w|^p - 1 - p v\cdot w \big|$.

By a Taylor expansion of the map $x \mapsto |x|^p$ centered at $v$, we have $g(v,w) = O(|w|^2)$ uniformly as $|w| \to 0$. Moreover, $g(v,w) = O(|w|^p)$ uniformly as $|w| \to +\infty$.

Assume first $1<p<2$. The ratio $\frac{g(v,w)}{|w|^p}$ for $w \neq 0$ satisfies
\[ \lim_{|w| \to 0} \frac{g(v,w)}{|w|^p} = 0, \qquad \lim_{|w| \to +\infty} \frac{g(v,w)}{|w|^p} = 1, \]
uniformly with respect to $v \in S^{N-1}$. 
Therefore $g(v,w) \le C|w|^p$ for some $C=C(p)>0$, which multiplied by $|\xi|^p$ immediately yields \eqref{eq:alg_p_less_2}.

Assume now $p \ge 2$. In this case, we consider the ratio $\frac{g(v,w)}{|w|^2+|w|^p}$. Computing the limits, we find that
\[ \limsup_{|w| \to 0} \frac{g(v,w)}{|w|^2+|w|^p} \le C', \qquad \lim_{|w| \to +\infty} \frac{g(v,w)}{|w|^2+|w|^p} = 1. \]
By the same uniform continuity arguments, there exists a uniform constant $C>0$ such that $g(v,w) \le C(|w|^2+|w|^p)$. Multiplying by $|\xi|^p$ yields  \eqref{eq:alg_p_geq_2}.
\end{proof}
}
 \section*{Acknowledgement}
This work has been partially supported by GNAMPA of INdAM.
\addcontentsline{toc}{section}{Bibliography}
\bibliographystyle{plain}
\bibliography{bibliografia}
\end{document}